\pgfplotsset{compat=1.18}
\newtheorem{proposition}{Proposition}
\newtheorem{inequality}{Inequality}
\newtheorem{definition}{Definition}
\newtheorem{assumption}{Assumption}
\crefname{assumption}{Assumption}{Assumptions}
\Crefname{assumption}{Assumption}{Assumptions}
\def\Var{\text{Var}}
\title{Stochastic Adaptive Optimization with Unreliable Inputs:\\
A Unified Framework for High-Probability Complexity Analysis} 
\author{Katya Scheinberg\thanks{{School of Industrial and Systems Engineering, Georgia Tech, Atlanta, GA, USA; E-mail: katya.scheinberg@isye.gatech.edu}}
   \and Miaolan~Xie\thanks{{Edwardson School of Industrial Engineering, Purdue University, West Lafayette, IN, USA; E-mail: miaolanx@purdue.edu}}}
\date{}
\begin{document}

\maketitle

\begin{abstract}
	We consider an unconstrained continuous optimization problem where, in each iteration, gradient estimates may be arbitrarily corrupted with a probability greater than $\frac 1 2$. Additionally, function value estimates may exhibit heavy-tailed noise. This setting captures challenging scenarios where both gradient and function value estimates can be unreliable, making it applicable to many real-world problems, which can have outliers and data anomalies. 
	We introduce an algorithmic and analytical framework that provides high-probability bounds on iteration complexity for this setting. The analysis offers a unified approach, encompassing methods such as line search and trust region.
\end{abstract}

\section{Introduction}

In this paper, we are interested in finding an $\epsilon$-stationary point of an unconstrained, differentiable, possibly non-convex function $$\phi: \R^n \to \R.$$ 
We make the standard assumption that $\nabla \phi$ is $L$-Lipschitz and bounded below by some constant $\phi^*$, but knowledge 
of $L$ and $\phi^*$ is not assumed by the algorithm. 
\begin{assumption}\label{ass:smooth_bounded}
	The gradient $\nabla \phi$ is $L$-Lipschitz continuous and $\phi$ is bounded below by some constant $\phi^*$. 
\end{assumption}
We consider a setting where neither the true function value $\phi(x)$ nor the true gradient $\nabla \phi(x)$ can be computed directly. Instead, the algorithm obtains all necessary function-related information by querying zeroth- and first-order stochastic oracles.

A key feature of our framework is its minimal assumptions on the oracles, which are only required to provide a reasonably accurate estimate with a certain probability. This flexibility allows for the modeling of highly unreliable or noisy inputs, a challenge in real-world optimization. 
Our setup is motivated by the prevalence of data anomalies and outliers in stochastic optimization, which can arise from diverse sources such as system errors, malfunctioning or byzantine machines, adversarial attacks and corruption, insufficient sampling, outdated data, or extreme, rare events. When such issues propagate into optimization, they can lead to poor performance and, consequently, suboptimal decision-making. This highlights the critical need for optimization algorithms that can reliably handle inputs containing bias, noise, and even adversarial corruption. Below, we give two motivating examples.

\textbf{Example 1: Expected Risk Minimization in Machine Learning.} 
Consider the stochastic optimization problem of expected risk minimization, where 5\% of the dataset consists of outliers.
In each iteration, a stochastic gradient estimate is obtained by averaging gradients over a randomly sampled mini-batch of data points. With a batch size of 32 (a common choice alongside 64, 128, and larger sizes), there is only about a 19\% probability that the mini-batch contains no outliers. This means that gradient estimates are free from corruption with probability 19\%, while with probability 81\%, the mini-batch contains at least one outlier, potentially corrupting the gradient estimate by an arbitrarily large amount. 
In contrast, function value estimates are often more robust to outliers. When the loss function is bounded (e.g., 0-1 loss), significantly corrupting the function value estimate with a mini-batch of 32 requires multiple outliers. For instance, the probability of sampling five or more outliers is only approximately 2\%. Consequently, the probability of a large error in the function value estimate can be substantially lower than that for gradient estimates.

\textbf{Example 2: Derivative-Free Optimization.} 
In many real-world problems -- such as those in chemical engineering and variational quantum algorithms (VQAs) -- gradient information is unavailable, and only noisy or biased estimates of function values can be obtained. 
A common approach in these settings is to estimate gradients using finite-difference methods, which typically require \( n + 1 \) function value samples to construct a gradient estimate in dimension \( n \).
Even under relatively benign function value estimates, this approach can lead to frequent gradient corruption. Suppose each individual function value estimate is accurate with probability 99\%, and is severely corrupted with probability 1\%. In moderate dimensions, such as \( n = 100 \), the probability that all \( n + 1 \) function value estimates are accurate is only approximately $0.99^{101} \approx 36\%$. This leaves a $1 - 36\% = 64\%$ probability that the gradient estimate is significantly corrupted due to at least one inaccurate function value estimate.

\paragraph{The Challenge.} In both settings described above, data anomalies can severely corrupt the inputs to optimization algorithms, highlighting a critical need for methods that are resilient to bias, noise, and adversarial corruption. However, most existing stochastic optimization algorithms, including popular methods like ADAM and AdaGrad, are not designed for such severely corrupted inputs, where gradient estimates may be arbitrarily corrupted with probability greater than 50\% and function value estimates are subject to heavy-tailed noise. Most theoretical analyses typically rely on strong assumptions, such as bounded variance of gradient estimates, which fail to hold in the presence of heavy-tailed noise or adversarial attacks. 

In this work, we introduce an adaptive algorithmic and analytical framework designed for such common yet challenging scenarios. The key idea is to leverage function value information to ensure reliability when gradient estimates are severely corrupted. Although function value estimates themselves can be subject to heavy-tailed noise or adversarial corruption, combining them with gradient estimates improves the overall reliability of the algorithm. 

A key aspect of our framework is that our analysis is conducted with respect to the true, underlying objective function $\phi(x)$ (e.g., the true expected risk in ERM), which we assume is free from data anomalies. All sources of data corruption and noise are modeled as inaccuracies in the oracle outputs. We begin by formalizing the probabilistic oracles that model the noisy and corrupted inputs available to the algorithm.

\subsection{Oracles}
	
{\bf Stochastic Zeroth-Order Oracle (SZO($\epsilon_f, q, \zeta_q$))}
	Given  a point $x$,  the oracle computes  $f(x,\Xi^0(x))$, a (random) estimate of the function value $\phi(x)$. $\Xi^0(x)$ is a random variable (whose distribution may depend on $x$).  
	We assume the absolute value of the estimation error $E(x)=|f(x,\Xi^0(x)) - \phi(x)|$ (we omit the dependence on $\Xi^0(x)$ for brevity) to have bounded expectation and a bounded $q$-th centered moment, for some $q\geq 2$.
		\begin{equation}\label{eq:zero_order_1}
		{\mathbb E_{\Xi^0(x)}}\left [ E(x)\right ]\leq \epsilon_f \ \text{and }\ {\mathbb E_{\Xi^0(x)}} (E(x)-\E[E(x)])^q\leq \zeta_q.
		\end{equation}
	
	The input to the oracle is $x$, the output is $f(x,\Xi^0(x))$ or $f(x,\Xi^0)$ for short, and the values ($\epsilon_f, q, \zeta_q$) are intrinsic to the oracle. 
	
	\begin{remark}
		\begin{itemize}
			\item In the case where $q=2$, the assumption on the zeroth-order oracle reduces to the noise having bounded expectation and bounded variance
			\begin{equation}\label{eq:zero_order_2}
				{\mathbb E}\left [ E(x)\right ]\leq \epsilon_f \ \text{and } \Var(E(x))\leq \zeta_2=\sigma^2.
			\end{equation}
			\item In the case where the estimation error is a subexponential random variable, then the noise has exponentially decaying tail and bounded moments of all orders. In this case, for some parameters $(\nu,b)$, 
				\begin{equation}\label{eq:zero_order_3}
				{\mathbb E}\left [ E(x)\right ]\leq \epsilon_f \ \text{and }\ {\mathbb E}\left [\exp\{\lambda (E(x)-\E[E(x)])\}\right ]\leq \exp\left(\frac{\lambda^2\nu^2}{2}\right), \quad \forall \lambda\in \left[0,\frac{1}{b}\right],
			\end{equation}
			or equivalently, for some parameters $(\lambda,a)$,
		\begin{equation}\label{eq:zero_order_4}
			{\mathbb E}\left [ E(x)\right ]\leq \epsilon_f \ \text{ and } {\mathbb P}\left ( E(x)\geq t\right ) \leq e^{\lambda (a-t)}, 	\text{ for any $t>0$}
		\end{equation}
		or equivalently, for some constant $K\geq 1$,
			\begin{equation}\label{eq:zero_order_5}
		{\mathbb E}\left [ E(x)\right ]\leq \epsilon_f \ \text{and }\ {\mathbb E} (E(x)-\E[E(x)])^q\leq \zeta_q \text{, where } \zeta_q=(Kq)^q \text{, for all } q\geq 1.
			\end{equation}
			This is the zeroth-order oracle assumption used in \cite{ jin2024high, cao2024first, berahas2023sequential,menickelly2023stochastic,scheinberg2023stochastic}.
			For a proof of the equivalence of \eqref{eq:zero_order_3}, \eqref{eq:zero_order_4}, and \eqref{eq:zero_order_5}, see Proposition 2.7.1 in \cite{vershynin2018high}.
		\end{itemize} 
	\end{remark}
	\textbf{Alternative assumption on function value differences.} Instead of assuming bounded moments on individual function value errors, we may consider a more general assumption on the difference of function values. For any two points $x$ and $x^+$, we could assume that the random variable 
	\begin{equation}
		D(x,x^+) = \left|(f(x,\Xi^0) - f(x^+,\Xi^0)) - (\phi(x) - \phi(x^+))\right|
	\end{equation}
	satisfies the bounded moments condition:
	\begin{equation}\label{eq:zero_order_diff}
		{\mathbb E_{\Xi^0}}\left [ D(x,x^+)\right ]\leq \epsilon_f \quad \text{and} \quad {\mathbb E_{\Xi^0}} \left(D(x,x^+)-\E[D(x,x^+)]\right)^q\leq \zeta_q.
	\end{equation}
	This assumption is more general than the individual bounded moments assumption above. It is particularly beneficial when using common random numbers. 
	Although all results extend under this broader assumption, we adopt the individual bounded-moments assumption for clarity of exposition.
	
	In real-life, there may be settings that are adversarial rather than stochastic. To address such settings, we introduce the following corrupted zeroth-order oracle designed for adversarial settings.
	
\textbf{Corrupted Zeroth-Order Oracle (CZO$(\epsilon_f, \epsilon_c,\delta_0)$)}. 
	Given  a point $x$,  the oracle computes  $f(x,\Xi^0(x))$, 
	where $\Xi^0(x)$ is a random variable, whose distribution may depend on $x$, $\epsilon_f, \epsilon_c$ and $\delta_0$, that satisfies 
	\begin{equation}\label{eq:zero_order_cor}
		\P_{\Xi^0(x)}(| \phi(x)-f(x,\Xi^0(x))|\leq  \epsilon_f)\geq 1-\delta_0, \quad
		\P_{\Xi^0(x)}(| \phi(x)-f(x,\Xi^0(x))|\leq  \epsilon_f+\epsilon_c)= 1.
	\end{equation}
	where $\delta_0\in(0,1)$.
	We view  $x$ as the input to the oracle,  $f(x,\Xi^0(x))$ as the output and the values $(\epsilon_f, \epsilon_c,\delta_0)$ as values intrinsic to the oracle. 
	By definition, $|f(x,\Xi^0(x))-\phi(x)|$ is a bounded random variable. 
While the CZO can be viewed as a special case of the SZO---since a bounded random variable has all moments bounded---it is instructive to analyze it separately. The CZO framework is particularly well-suited for modeling noise from adversarial settings or data outliers, as it explicitly distinguishes between two scenarios: 
\begin{itemize}
    \item With probability $1-\delta_0$, the error is small, resulting from typical stochastic sampling or minor bias.
    \item With probability $\delta_0$, the error is large, due to outliers or adversarial corruption.
\end{itemize}
This modeling differs from the SZO, and the distinction is crucial because the oracle parameters have distinct interpretations. For instance, $\epsilon_f$ in the CZO represents a probabilistic error bound, whereas in the SZO it represents a bound on the expected error. 

\begin{remark}[Expected Risk Minimization with Corrupted Data]
An example that motivates the CZO oracle is expected risk minimization (ERM) in machine learning with a corrupted dataset. Consider an objective $\phi(x) = \mathbb{E}_{d \sim \mathcal{D}}[l(x, d)]$, where $l(x,d)$ is the loss for a model with parameters $x$ on a data point $d$. In practice, $\phi(x)$ is estimated by an empirical average over a mini-batch of data, $f(x) = \frac{1}{B}\sum_{i=1}^B l(x, d_i)$.

Now, suppose a fraction of the underlying data distribution is corrupted. When we draw a mini-batch, the number of corrupted samples in it is random. We consider a setting where the loss function is bounded (e.g., 0-1 bounded loss or some other bounded loss), which ensures the error from corrupted samples is also bounded. We can categorize the outcome based on the level of corruption in the mini-batch:
\begin{itemize}
    \item With probability $1-\delta_0$, the number of corrupted data points in the mini-batch is small, such that their effect on the function value estimate is limited. In this case, the estimation error $|f(x) - \phi(x)|$ is bounded by a baseline error term $\epsilon_f$.
    \item With probability $\delta_0$, the mini-batch contains a significant number of corrupted points. This leads to a larger estimation error, which is bounded by $\epsilon_f + \epsilon_c$, where $\epsilon_c$ represents the maximum additional error due to the high level of corruption.
\end{itemize}
This scenario maps directly to the CZO($\epsilon_f, \epsilon_c, \delta_0$) definition, illustrating its utility in modeling practical machine learning problems. Note that under this model, the expected error is bounded by $\epsilon_f+\delta_0\cdot\epsilon_c$.
\end{remark}
		
{\bf Stochastic First-Order Oracle (SFO($r(\cdot),
    \delta_1$))}.
	Given a point $x$ and a parameter $\alpha > 0$ (e.g., a step size or trust-region radius), the oracle computes and outputs $g(x, \Xi^1(x))$ or $g(x, \Xi^1)$ for short, which is a (random) estimate of the 
    gradient $\nabla \phi(x)$. The distribution of the random variable $\Xi^1(x)$ may depend on $x$. We assume that with probability at least $1-\delta_1$, the estimation error is bounded by an accuracy tolerance function $r(\alpha)$:
	\begin{equation}\label{eq:first_order_general}
	    \mathbb{P}_{\Xi^1(x)}\left( \|\nabla \phi(x) - g(x, \Xi^1(x))\| \leq r(\alpha) \right) \geq 1 - \delta_1.
	\end{equation}
	The function $r(\alpha)$ defines the required relative accuracy, which may depend on algorithm parameters like $\alpha$ and intrinsic oracle parameters (related to bias and variance terms). With probability $\delta_1 \in [0,1)$, the gradient estimate may be arbitrarily corrupted.

	\textbf{Connections to Algorithmic Frameworks.} 
        The general form in \eqref{eq:first_order_general} unifies 
        accuracy conditions for many optimization methods in the literature, including:
		\begin{itemize}
			\item \textbf{Trust-Region Methods:} In trust-region-like methods, the model accuracy is typically required to be proportional to the trust-region radius $\alpha$.  For instance, one typical condition \cite{cao2024first} is: 
			\begin{equation} \label{eq:first_order_trust_region}
				\mathbb{P}_{\Xi^1(x)}\left(\|\nabla \phi(x) - g(x, \Xi^1(x))\| \leq \epsilon_g+\kappa\alpha\right)\geq 1-\delta_1.
			\end{equation}
			\item \textbf{Line-Search Methods:} In line-search or step-search methods, the accuracy requirement might take the following form \cite{jin2024high}: 
			\begin{equation}\label{eq:first_order_specific_form}
				{\mathbb P_{\Xi^1(x)}\left(\|g(x,\Xi^1(x))-\nabla \phi
        (x)\|\leq \max \{\epsilon_g, \min\{\tau,
        \kappa \alpha\}\|g(x,
        \Xi^1(x))\|\}\right)\geq 
        1-\delta_1,}
			\end{equation}
			or
			\begin{equation}\label{eq:first_order_specific_form2}
				{\mathbb P_{\Xi^1(x)}\left(\|g(x,\Xi^1(x))-\nabla \phi
        (x)\|\leq \max \{\epsilon_g, \tau\|\nabla \phi(x)\|\}\right)\geq 
        1-\delta_1.}
			\end{equation}
			While the analyses in this paper apply equally well to both \eqref{eq:first_order_specific_form} and \eqref{eq:first_order_specific_form2}, we will primarily work with \eqref{eq:first_order_specific_form} due to its more common use in the literature.
			
		\end{itemize}
		In both examples, the presence of $\epsilon_g \ge 0$ in the accuracy requirement allows for a constant, irreducible amount of bias in the gradient estimation, while $\kappa \ge 0$ and $\tau \ge 0$ are constants controlling the relative accuracy. The values $(\epsilon_g, \tau,\kappa,\delta_1)$ are intrinsic to the oracle. 

\begin{remark}
	By definition, we allow the first-order oracle to be corrupted by an arbitrarily large amount in each iteration with probability $\delta_1$. This model is well-suited for scenarios like Empirical Risk Minimization (ERM) with corrupted data. For instance, consider estimating the gradient from a mini-batch of data where a fraction of the underlying dataset is corrupted by outliers or adversarial examples. The probability of sampling at least one corrupted data point in a mini-batch depends on the corruption ratio and the batch size. If a mini-batch is contaminated, the resulting gradient estimate can be arbitrarily poor, pointing in any direction with any magnitude. This corresponds to the failure case of the SFO, which occurs with probability $\delta_1$. Conversely, if the mini-batch is clean, the gradient estimate is still stochastic, but its error can typically be bounded, satisfying the accuracy condition $\|\nabla \phi(x) - g(x, \Xi^1)\| \le r(\alpha)$ with probability $1-\delta_1$. The SFO framework thus naturally models the behavior of gradient estimation in the presence of data corruption.

	The stochastic derivative-free optimization setting discussed in the introduction provides another natural application of our framework. In the scenario with $n=100$, where each function value estimate is corrupted or has a large noise with probability $\delta_0 = 0.01$, the resulting finite-difference gradient estimate can be arbitrarily corrupted with a probability as high as $\delta_1 = 0.64$. This scenario is precisely what our SZO and SFO definitions are designed to handle.
\end{remark}

To summarize, the key oracle parameters that characterize the quality of the estimates are:
\begin{itemize}
    \item For SZO: $\epsilon_f$ (expected error bound), $q \geq 2$ (moment order), $\zeta_q$ (moment bound)
    \item For CZO: $\epsilon_f$ (baseline error), $\epsilon_c$ (additional corruption), $\delta_0$ (corruption probability)
    \item For SFO: $r(\cdot)$ (accuracy tolerance function), $\delta_1$ (failure probability)
\end{itemize}

\textbf{Optimization in Highly Unreliable or Noisy Environments.} A central challenge addressed in this paper is optimization in what we term a \emph{highly unreliable} or \emph{highly noisy} environment. This setting is characterized by a low probability of obtaining reasonable gradient and function value estimates from the oracles, meaning in every iteration the algorithm must navigate using local models that are more often misleading than helpful. Specifically, we consider scenarios where the joint probability of receiving sufficiently accurate gradient and function value estimates in any given iteration can be less than $\frac 1 2$. Our analysis is specifically designed to ensure reliability in this challenging regime. The formal probabilistic framework and related notions for this setting will be made precise in the subsequent sections.

\textbf{Related literature.} Our work contributes to the growing literature on adaptive optimization methods with stochastic or probabilistic oracles. 
The foundation for these methods was established in \cite{CS17} and \cite{gratton2018complexity}. In \cite{CS17}, the authors analyzed the expected convergence of line-search and cubic regularized Newton methods using probabilistic models for gradient information. 
In \cite{gratton2018complexity}, the authors establish high-probability convergence for a trust-region method \cite{bandeira2014convergence} that permits arbitrarily inaccurate gradient estimates with probability greater than $\frac 1 2$.
However, both of these works assume that all function value estimates are exact.

The line search method was extended by \cite{berahas2021global} to allow for bounded noise in function values, and by \cite{paquette2018stochastic} to a more general setting with unbounded noise with small variance, while under more complex step and accuracy adjustment process and zeroth-order oracle requirements. 
These studies initially concentrated on achieving convergence in expectation. More recently, \cite{jin2024high} introduced the first high-probability iteration complexity bounds for a stochastic step-search method, while \cite{jin2025sample} developed a comprehensive framework for analyzing the expected and high-probability sample complexity of this class of adaptive algorithms. All of these contributions assume that the first- and higher-order oracle outputs are sufficiently accurate with probability greater than $\frac 1 2$ in every iteration.

The stochastic trust-region literature has been developed in parallel. \cite{blanchet2019convergence} established convergence rate in expectation for a version of stochastic trust-region methods. Building on this and \cite{jin2024high}, \cite{cao2024first} established first- and second-order high-probability complexity bounds for trust-region methods with noisy oracles, assuming the oracle outputs are sufficiently accurate with high probability in every iteration. Similar progress has been made for adaptive cubic regularization methods. For instance, \cite{bellavia2020stochastic, bellavia2022adaptive} developed stochastic cubic regularization methods for finite-sum minimization and nonconvex optimization, and \cite{scheinberg2023stochastic} provided a high-probability complexity bound for a stochastic adaptive regularization method with cubics.
Recent work has also extended these adaptive stochastic methods to more specialized settings. \cite{berahas2023sequential} introduced a sequential quadratic programming (SQP) method with high-probability complexity bounds for solving nonlinear equality-constrained stochastic optimization, and \cite{menickelly2023stochastic} developed stochastic quasi-Newton methods with high-probability iteration complexity bounds for settings where utilizing common random numbers in gradient estimation is not feasible.

Importantly, most existing works in the area assume that the first- and higher-order oracle outputs are sufficiently accurate with probability greater than $\frac 1 2$ in every iteration. A notable exception is \cite{gratton2018complexity}, 
which establishes a high-probability iteration complexity bound for a trust-region method that allows the gradient estimate in each iteration to have arbitrarily large error with probability greater than $\frac 1 2$. 
However, a key limitation of \cite{gratton2018complexity} is the assumption that function value estimates are exact. 
In this paper, we tackle the challenging scenario where \emph{both} the gradient estimate can have arbitrarily large error with probability greater than $\frac 1 2$, \emph{and} function value estimates may be corrupted by heavy-tailed noise. 
As a result, we extend the application and scope of previous works by allowing significantly more relaxed assumptions on the inputs of the algorithm.
We develop a unified algorithmic framework that encompasses both trust-region and line-search methods, together with a unified analysis that applies to all algorithms within this framework.

\textbf{Our contribution.} The main contributions of this work are as follows:
\begin{itemize}
	\item \textbf{A Unified Algorithmic Framework:} We introduce a unified algorithmic and analytical framework for adaptive optimization that is resilient to highly unreliable inputs. This framework is general enough to encompass both trust-region and line-search methods and is specifically designed to handle settings where gradient estimates can be arbitrarily corrupted with high probability and function value estimates are subject to heavy-tailed noise.
	\item \textbf{Analysis for Highly Unreliable or Noisy Environments:} We remove the restrictive assumption that in every iteration, a reliable model (i.e. a sufficiently accurate gradient and function value estimate) must be obtained with a probability greater than $\frac{1}{2}$. Our analysis is the first to formally handle scenarios where with high probability (could be close to one), the model can be arbitrarily corrupted in every iteration with the function 
    value estimates subject to heavy-tailed noise, a critical feature for ensuring robustness against data anomalies and corruption.
    \item \textbf{High-Probability Complexity Bounds:} We provide the first iteration complexity analysis for this setting that establishes (overwhelmingly) high-probability bounds on the stopping time. We show that the tail probability of this stopping time decays either exponentially or polynomially, depending on the assumptions on the zeroth-order oracle (e.g., whether the noise is sub-exponential or heavy-tailed).
\end{itemize}

In the remainder of this paper, we will first present the unified algorithmic framework in \Cref{sec:algo}, along with its instantiations for both trust region and line search methods. Following this, \Cref{sec:definitions} will introduce the general stochastic process that underpins the framework, as well as the key definitions and notation used throughout our discussion. Finally, \Cref{sec:main_bound} will detail the main theoretical results, providing high-probability iteration complexity bounds and proofs for the general framework, as well as for its specific instantiations in trust region and line search methods.

\section{Algorithm}\label{sec:algo}

In this section, we present our unified algorithmic framework for adaptive optimization with highly unreliable inputs. The framework handles both gradient and function value noise through the oracles defined in the introduction. It encompasses several algorithms in the literature, including trust region and line search methods, as special cases. 

\subsection{General Algorithmic Framework}

The algorithmic framework employs an adaptive step-size strategy in which the step size parameter is adjusted dynamically in each iteration based on the estimated progress of the algorithm.  
In each iteration, we construct a local model of the objective function using gradient and function value estimates obtained from the oracles. A candidate point is then computed, typically by minimizing this model. The quality of this candidate point is evaluated using a \emph{sufficient descent test}, which compares the estimated reduction in function value (obtained from the zeroth-order oracle) to a quantity related to the reduction in the model value. Since function value estimates are noisy, the sufficient descent test is typically relaxed compared to its deterministic counterpart to account for this noise.

Based on the outcome of the sufficient descent test, the algorithm takes one of three actions:
\begin{itemize}
    \item If the candidate point passes the sufficient descent test and meets additional acceptance criteria, it is accepted and the step size parameter is increased. This encourages larger steps in subsequent iterations, allowing for potentially faster progress. Such steps are called \emph{successful steps}.
    \item If sufficient reduction is achieved but additional acceptance criteria are not met, the candidate point is accepted but the step size parameter is decreased. 
    \item If sufficient reduction is not achieved, the candidate point is rejected and the step size parameter is decreased. 
\end{itemize}
Steps in these latter two categories are called \emph{unsuccessful steps}. Decreasing the step size parameter encourages more conservative steps within a smaller neighborhood where the model is more likely to be accurate. 

Importantly, this mechanism allows the algorithm to dynamically increase or decrease the step size based on the estimated progress of the algorithm from iteration to iteration, allowing it to adapt to the local geometry of the function and the level of noise from the oracles. The framework is presented in \Cref{alg:generic_stochastic}.

\begin{algorithm}[H]
	\caption{General Algorithmic Framework}
	\label{alg:generic_stochastic}
	\begin{algorithmic}[1]
	\State \textbf{Initialization:} Choose parameters $\theta \in (0,1)$ (sufficient reduction parameter), $\gamma_{\mathrm{inc}} > 1$ (step size increase factor), $\gamma_{\mathrm{dec}} \in (0,1)$ (step size decrease factor), initial point $x_0 \in \mathbb{R}^n$, and initial step size $\alpha_0 > 0$. Set $k \leftarrow 0$.
	
	\State \textbf{Main Loop:} For $k = 0, 1, 2, \ldots$
	
	\State \textbf{Step 1: Model Construction and Step Computation}
	
	Query the first-order oracle to obtain gradient estimate $g_k = g(x_k, \Xi_{1,k})$, query the zeroth-order oracle to obtain function value estimate $f_k = f(x_k, \Xi_{0,k})$ at $x_k$, construct a local model $m_k$ of $\phi$ around $x_k$ using $g_k$ and $f_k$, and compute a step $s_k(\alpha_k)$ that achieves sufficient model reduction.
	
	\State \textbf{Step 2: Compute Trial Point and Acceptance Test}

	Set trial point $x_k^+ \leftarrow x_k + s_k(\alpha_k)$, query the zeroth-order oracle to obtain $f_k^+ = f(x_k^+, \Xi_{0,k}^+)$ (function value estimate at $x_k^+$), compute reduction estimate $f_k - f_k^+$, and check sufficient reduction conditions (parameterized by $\theta$) with respect to the predicted model reduction $m_k\left(x_k\right)-m_k\left(x_k^{+}\right)$.
	
	\State \textbf{Step 3: Update Rule}

	\If{sufficient reduction is achieved}
		\State $x_{k+1} \leftarrow x_k^+$ {(accept step)}
		\If{additional acceptance criteria are met}
			\State $\alpha_{k+1} \leftarrow \gamma_{\mathrm{inc}} \alpha_k$ (increase step size)
		\Else
			\State $\alpha_{k+1} \leftarrow \gamma_{\mathrm{dec}} \alpha_k$ (decrease step size)
		\EndIf
	\Else
		\State $x_{k+1} \leftarrow x_k$ (reject step)
		\State $\alpha_{k+1} \leftarrow \gamma_{\mathrm{dec}} \alpha_k$ (decrease step size)
	\EndIf
    
	\State \textbf{Step 4:} Set $k \leftarrow k + 1$ and continue.
	\end{algorithmic}
\end{algorithm}

\vspace{3mm}

The framework above is deliberately general; the specific forms of the model $m_k$, step $s_k(\alpha_k)$, and acceptance criteria depend on the specific instantiations of the framework. We next show how a trust region and line search method fit into this framework.

\subsection{Trust Region Instantiation}

In the trust region approach, the local model $m_k$ is typically as follows:
\begin{equation}
m_k(x_k + s) = f_k + g_k^T s + \frac{1}{2} s^T H_k s,
\end{equation}
where $H_k$ can be any matrix as long as there exists some constant $\kappa_H$ such that $\norm{H_k} \leq \kappa_H$ for all $k$.

The candidate step is computed by approximately minimizing the model in the ball of radius $\alpha_k$ around the current iterate:
\begin{equation}
s_k(\alpha_k) = \arg\min_{s \in B(x_k, \alpha_k)} m_k(x_k + s).
\end{equation}

The accuracy requirement for the first-order oracle in the trust region method, following \cite{cao2024first}, is given by \Cref{eq:first_order_trust_region}, where a smaller trust region radius translates to a higher accuracy requirement.

\begin{algorithm}[H]
\caption{Trust Region Based Algorithm}
\label{alg:trust_region}

\textbf{Initialization:} Starting point $x_0$; initial radius $\alpha_0 > 0$;  $\eta_1 > 0$, $\eta_2 > 0$, $\delta_1 \in [0,1)$, $\kappa_{\mathrm{fcd}} \in(0,1]$, $\gamma_{\mathrm{inc}} > 1$, $\gamma_{\mathrm{dec}} \in (0,1)$, $\epsilon_f > 0$.

\vspace{2mm}

\textbf{For} $k = 0, 1, 2, \ldots$ \textbf{do}

\vspace{1mm}

Compute vector $g_k$ using SFO$(r(\alpha_k), \delta_1)$ and matrix $H_k$ that has a norm bounded by some constant $\kappa_H$.

\vspace{1mm}

Compute $s_k$ by approximately minimizing $m_k$ in $B(x_k, \alpha_k)$ so that it satisfies $$m_k\left(x_k\right)-m_k\left(x_k+s_k\right) \geq \frac{\kappa_{\mathrm{fcd}}}{2}\left\|g_k\right\| \min \left\{\frac{\left\|g_k\right\|}{\left\|H_k\right\|}, \alpha_k\right\}.$$ 

\vspace{1mm}

Compute $f_k$ and $f_k^+$ using Zeroth-order oracle, with input $x_k$, and $x_k + s_k$, and then compute
$$\rho_k = \frac{f_k - f_k^+ + 2\epsilon_f}{m_k(x_k) - m_k(x_k + s_k)}.$$

\vspace{1mm}

\textbf{if} $\rho_k \geq \eta_1$ \textbf{then}

\vspace{1mm}

\hspace{0.5cm} Set $x_{k+1} = x_k + s_k$ and
$$\alpha_{k+1} = \begin{cases}
\gamma_{\mathrm{inc}} \alpha_k & \text{if } \|g_k\| \geq \eta_2 \alpha_k \\
\gamma_{\mathrm{dec}} \alpha_k & \text{if } \|g_k\| < \eta_2 \alpha_k
\end{cases}$$

\vspace{1mm}

\textbf{else}

\vspace{1mm}

\hspace{0.5cm} Set $x_{k+1} = x_k$ and $\alpha_{k+1} = \gamma_{\mathrm{dec}} \alpha_k$.

\vspace{1mm}
\end{algorithm}

\vspace{2mm}

This algorithm follows the standard trust-region framework, where the additional term $2\epsilon_f$ accounts for noise in function evaluations. It matches Algorithm~1 of \cite{cao2024first}, with the sole difference that the trust-region radius is updated using $\gamma_{\mathrm{inc}}$ and $\gamma_{\mathrm{dec}}$ (rather than $\gamma$ and $\gamma^{-1}$).
In this trust region algorithm, an iteration is defined as successful if the sufficient reduction condition $\rho_k \geq \eta_1$ and the gradient norm condition $\left\|g_k\right\| \geq \eta_2 \alpha_k$ are both satisfied. 

\subsection{Line Search Instantiation}

One can obtain a step search or line search method by setting the model to be
$$m_k(x_k + s) = f_k + g_k^T s + \frac{1}{2\alpha_k} s^T s$$ and $s_k(\alpha_k) = -\alpha_k g_k$.

The accuracy requirement for the first-order oracle in the line search method, following \cite{jin2024high}, is given by \Cref{eq:first_order_specific_form}. A smaller step size parameter translates to a higher accuracy requirement.

\Cref{alg:line_search} is an instantiation of the general framework in \Cref{alg:generic_stochastic} for a line search method. It is identical to the SASS Algorithm 
from \cite{jin2024high} but with an additional criterion of:
$$\norm{g_k} < \epsilon_{\mathrm{rej}},$$
for some $\epsilon_{\mathrm{rej}}>0$. 
The condition $f_k^+ \leq f_k - \alpha_k\theta\|g_k\|^2 + 2\epsilon_f$ checks if sufficient descent is achieved. If it is, the step is accepted. The additional condition $\|g_k\| \geq \epsilon_{\mathrm{rej}}$ then determines whether the step size should be increased. If the sufficent descent condition is not met, or the norm of the gradient estimate is too small, then the iteration is unsuccessful, and the step size is decreased. 
This mechanism prevents the algorithm from increasing the step size when the gradient estimate is small, which might indicate unreliable estimates rather than actual progress towards a solution.

\begin{algorithm}[H]
\caption{Line Search Based Algorithm}
\label{alg:line_search}

\textbf{Initialization:} Choose $\theta \in (0,1)$, $\gamma_{\mathrm{inc}} > 1$, $\gamma_{\mathrm{dec}} \in (0,1)$, $\epsilon_f > 0$, $\epsilon_{\mathrm{rej}} > 0$, $x_0 \in \mathbb{R}^n$, and $\alpha_0  > 0$. Set $k \leftarrow 0$.

\vspace{2mm}

\textbf{For} $k = 0, 1, 2, \ldots$ \textbf{do}

\vspace{1mm}

Compute gradient estimate $g_k$ using SFO$(r(\alpha_k), \delta_1)$. 

\vspace{1mm}

Compute search direction $d_k = -g_k$ and trial point $x_k^+ = x_k + \alpha_k d_k$.

\vspace{1mm}

Compute $f_k$ and $f_k^+$ using Zeroth-order oracle, with input $x_k$ and $x_k^+$.

\vspace{1mm}

\textbf{if} $f_k^+ \leq f_k - \alpha_k\theta\|g_k\|^2 + 2\epsilon_f$ \textbf{then}

\vspace{1mm}

\hspace{0.5cm} Set $x_{k+1} = x_k^+$ and
$$\alpha_{k+1} = \begin{cases}
	\gamma_{\mathrm{inc}}\alpha_k, & \text{if } \|g_k\| \geq {\epsilon_{\mathrm{rej}}} \\
\gamma_{\mathrm{dec}} \alpha_k & \text{if } \|g_k\| < {\epsilon_{\mathrm{rej}}}
\end{cases}$$

\vspace{1mm}

\textbf{else}

\vspace{1mm}

\hspace{0.5cm} Set $x_{k+1} = x_k$ and $\alpha_{k+1} = \gamma_{\mathrm{dec}}\alpha_k$.

\vspace{1mm}
\end{algorithm}

\subsection{Key Adaptive Features}
In the algorithm, both the \textbf{step size parameter} and the \textbf{gradient accuracy requirement} are adaptive. The step size parameter is adjusted based on the estimated progress of the algorithm from iteration to iteration. It can increase or decrease in each iteration, and it is not monotonic. This allows the algorithm to adapt to the local landscape of the function.

In each iteration, there are two reasons the algorithm might reject a candidate step. Either:
\begin{itemize}
    \item The oracle estimates are so inaccurate so that the candidate step $s_k$ is not a descent direction, or
    \item The step size parameter is so large so that taking this step would overshoot.
\end{itemize}
When the algorithm rejects a step, it decreases the step size parameter and computes a new candidate step. This reduction addresses the second potential cause of failure. Additionally, because the accuracy requirement of the first-order oracle is governed by $r(\alpha_k)$, a smaller step size parameter demands higher accuracy from the oracle. Thus, by adaptively adjusting the step size, the algorithm simultaneously addresses both potential causes of failure.

In \Cref{sec:main_bound}, we present a \textbf{unified analysis framework} that allows \emph{any} algorithm within this algorithmic framework to be analyzed using the same theoretical approach, despite differences in the choice of model $m_k$, candidate step calculation, and sufficient descent condition. 

\section{Definitions and Notation}\label{sec:definitions}

We introduce the key definitions and notation used throughout the paper. The algorithm generates a stochastic process, and we define the relevant random variables and concepts needed for the analysis.

\subsection{Stochastic Process and Filtration}

Let $M_k$ denote the collection of random variables $\{\Xi_{0,k}, \Xi_{0,k}^+, \Xi_{1,k}\}$, whose realizations are $\{\xi_{0,k}, \xi_{0,k}^+, \xi_{1,k}\}$, where:
\begin{itemize}
    \item $\Xi_{0,k}$ and $\Xi_{0,k}^+$ are the random variables from the zeroth-order oracle at $X_k$ and $X_k^+$, respectively,
    \item $\Xi_{1,k}$ is the random variable from the first-order oracle at $X_k$.
\end{itemize}

At iteration $k$, we define the following random variables:
\begin{itemize}
    \item $X_k$: the random iterate at step $k$, with realization $x_k$,
    \item $G_k$: the random gradient estimate from the first-order oracle, with realization $g_k$,
    \item $E_k = |f(X_k, \Xi_{0,k}) - \phi(X_k)|$: the random absolute error in the zeroth-order oracle at $X_k$, with realization $e_k$,
    \item $E_k^+ = |f(X_k^+, \Xi_{0,k}^+) - \phi(X_k^+)|$: the random absolute error in the zeroth-order oracle at $X_k^+$, with realization $e_k^+$,
    \item $\mathcal{A}_k$: the random step size parameter at step $k$, with realization $\alpha_k$.
\end{itemize}

The algorithm generates a stochastic process $\{(G_k, E_k, E_k^+, X_k, \mathcal{A}_k)\}$ with realizations $(g_k, e_k, e_k^+, x_k, \alpha_k)$ adapted to the filtration $\{\mathcal{F}_k : k \geq 0\}$, where $\mathcal{F}_k = \sigma(M_0, M_1, \ldots, M_k)$ is the $\sigma$-algebra generated by $M_0, \ldots, M_k$.
All the random variables defined above are measurable with respect to $\mathcal{F}_k$. Note that $X_k$ and $\mathcal{A}_k$ are in addition measurable with respect to $\mathcal{F}_{k-1}$.

\subsection{Key Definitions}

\begin{definition}[True Iteration]
\label{def:true_iteration}
We say that iteration $k$ is \textbf{true} if both the gradient estimate and function value estimates are sufficiently accurate:
\begin{align}
\|\nabla \phi(X_k) - G_k\| &\leq r(\mathcal{A}_k), \quad \text{and} \\
E_k + E_k^+ &\leq 2\epsilon_f,
\end{align}
where $r(\mathcal{A}_k)$ is the accuracy tolerance function from the first-order oracle definition, and $\epsilon_f$ is an upper bound on the expected error of the zeroth-order oracle.
\end{definition}

The probability of an iteration being true, denoted by $p$, is central to our framework. A lower bound on $p$ can be established from the oracle failure probabilities. The first-order oracle fails with probability $\delta_1$. For the corrupted zeroth-order oracle, since $\delta_0$ is the failure probability of a single query, and an iteration requires two CZO queries (at $x_k$ and $x_k^+$), the probability that at least one of these is corrupted can be bounded by $2\delta_0$ using a union bound. 
This provides a lower bound on the probability of a true iteration: $p \ge 1 - \delta_1 - 2\delta_0$.

A key focus of this paper is the challenging regime where this probability $p$ is low, which we formalize as follows.

\begin{definition}[Highly Unreliable Environment]
\label{def:highly_unreliable}
An optimization environment is considered \textbf{highly unreliable} if the probability $p$ of an iteration being true satisfies $p < 1/2$.
\end{definition}
When working with the corrupted zeroth-order oracle, this condition implies $\delta_1 + 2\delta_0 > 1/2$ in terms of the oracle failure probabilities. In such a regime, the local model provided to the algorithm is more likely to be misleading than helpful in any given iteration.

\begin{definition}[Successful Iteration]
\label{def:successful_iteration}
We say that iteration $k$ is \textbf{successful} if the iterate is updated and the step size is increased, i.e., $X_{k+1} = X_k^+$ and $\mathcal{A}_{k+1} \geq \mathcal{A}_k$. Otherwise, the iteration is \textbf{unsuccessful}.
\end{definition}

We define the following indicator random variables:
\begin{itemize}
    \item $I_k := \mathbbm{1}\{\text{iteration $k$ is true}\}$
    \item $\Theta_k := \mathbbm{1}\{\text{iteration $k$ is successful}\}$
\end{itemize}

Next, we define a random variable $U_k$ that measures whether the step size parameter is large or small. The threshold $\bar{\alpha}$ is a parameter that will depend on the algorithm being used; we will later give its concrete definition for the trust region and line search methods.

\begin{definition}[Large and Small Steps]
\label{def:large_small_steps}
For a threshold $\bar{\alpha} > 0$, we define the random variable $U_k$ as:
\begin{align}
U_k = \begin{cases}
0, & \text{if } \max\{\mathcal{A}_k, \mathcal{A}_{k+1}\} \leq \bar{\alpha} \text{ (small step)} \\
1, & \text{otherwise } \text{ (large step)} 
\end{cases}
\end{align}
Note that $U_k$ is measurable with respect to $\mathcal{F}_k$, since $\mathcal{A}_{k+1}$ is completely determined by $\mathcal{A}_k$ and $\Theta_k$.
\end{definition}

Since $\phi$ is non-convex, our goal is to find an approximately stationary point.
\begin{definition}[Stopping Time]
\label{def:stopping_time}
For some $\epsilon > 0$, let $T_\epsilon$ be the first time such that $\|\nabla \phi(X_{T_\epsilon})\| \leq \epsilon$. We refer to $T_\epsilon$ as the \emph{stopping time} of the algorithm.
\end{definition}

Finally, the following measure of progress will be used as a potential function in our analysis.

\begin{definition}[Measure of Progress]
\label{def:progress_measure}
For each $k \geq 0$, define 
$$Z_k := \phi(X_k) - \phi^*,$$
where $\phi^*$ is the infimum of the function $\phi$. Making progress means decreasing $Z_k$.
\end{definition}

\section{Main Bound}\label{sec:main_bound}
We now present our main results. We first introduce some key assumptions and lemmas that will be used in the analysis. In \Cref{subsec:general_bound}, we present the high probability iteration complexity bound for the general algorithmic framework, assuming these assumptions hold. In \Cref{subsec:trust_region}, we prove that the assumptions hold for the trust region method, and specialize the general bound for the trust region method. Then, in \Cref{subsec:line_search}, we prove that the assumptions hold for the line search method, and specialize the general bound for the line search method.

A key building block in our analysis is that iterations are true with some positive probability:

\begin{proposition}[Probabilistic Accuracy]
\label{ass:probabilistic_accuracy}
There exists a constant $p \in (0, 1]$ such that for all $k < T_\epsilon$:
$$\mathbb{P}(I_k = 1 \mid \mathcal{F}_{k-1}) \geq p.$$
\end{proposition}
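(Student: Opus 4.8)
The plan is to unfold the event $\{I_k=1\}$ according to \Cref{def:true_iteration} into the accurate-gradient event $A_k := \{\|\nabla\phi(X_k)-G_k\|\le r(\mathcal{A}_k)\}$ and the accurate-function-value event $B_k := \{E_k+E_k^+\le 2\epsilon_f\}$, so that $I_k=\mathbbm{1}_{A_k}\mathbbm{1}_{B_k}$ and, by a union bound, $\mathbb{P}(I_k=1\mid\mathcal{F}_{k-1})\ge 1-\mathbb{P}(A_k^c\mid\mathcal{F}_{k-1})-\mathbb{P}(B_k^c\mid\mathcal{F}_{k-1})$. It then remains to bound each complementary probability by an oracle parameter.

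For the gradient event I would use that $X_k$ and $\mathcal{A}_k$ are $\mathcal{F}_{k-1}$-measurable: conditioning on $\mathcal{F}_{k-1}$ freezes both the query point and the radius $r(\mathcal{A}_k)$, so the SFO guarantee \eqref{eq:first_order_general} applies directly to the conditional law of $\Xi_{1,k}$ and gives $\mathbb{P}(A_k^c\mid\mathcal{F}_{k-1})\le\delta_1$; for the two instantiations this is where one checks that the algorithmic accuracy tolerance coincides with the oracle's, i.e.\ \eqref{eq:first_order_trust_region} and \eqref{eq:first_order_specific_form}. For the function-value event I would use the inclusion $\{E_k\le\epsilon_f\}\cap\{E_k^+\le\epsilon_f\}\subseteq B_k$. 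The first query, at the $\mathcal{F}_{k-1}$-measurable point $X_k$, is immediate from \eqref{eq:zero_order_cor}: $\mathbb{P}(E_k>\epsilon_f\mid\mathcal{F}_{k-1})\le\delta_0$. The query at $X_k^+=X_k+s_k(\mathcal{A}_k)$ is the delicate one, since $s_k$ depends on $G_k$ and hence $X_k^+$ is \emph{not} $\mathcal{F}_{k-1}$-measurable --- this is the main obstacle. I would handle it by conditioning on the enlarged $\sigma$-algebra $\mathcal{G}_k := \sigma(\mathcal{F}_{k-1},\Xi_{1,k},\Xi_{0,k})$, with respect to which $X_k^+$ is measurable; applying \eqref{eq:zero_order_cor} at the now-frozen point $X_k^+$ gives $\mathbb{P}(E_k^+>\epsilon_f\mid\mathcal{G}_k)\le\delta_0$, and taking the conditional expectation given $\mathcal{F}_{k-1}$ (tower property) preserves this bound. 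A union bound then yields $\mathbb{P}(B_k^c\mid\mathcal{F}_{k-1})\le 2\delta_0$, and combining with the gradient estimate gives $\mathbb{P}(I_k=1\mid\mathcal{F}_{k-1})\ge 1-\delta_1-2\delta_0=:p$, which lies in $(0,1]$ whenever $\delta_1+2\delta_0<1$; the bound is uniform in $k$, so it holds in particular for every $k<T_\epsilon$.

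The remaining subtlety is the SZO case, where $\epsilon_f$ bounds only $\mathbb{E}[E_k]$ rather than a high-probability radius, so the step bounding $\mathbb{P}(E_k>\epsilon_f\mid\mathcal{F}_{k-1})$ cannot stand. There I would instead use $\mathbb{E}[E_k+E_k^+\mid\mathcal{F}_{k-1}]\le 2\epsilon_f$ (the bound on $\mathbb{E}[E_k^+\mid\mathcal{F}_{k-1}]$ obtained by the same conditioning on $\mathcal{G}_k$ as above) together with Markov's inequality, which gives $\mathbb{P}(E_k+E_k^+>2c\,\epsilon_f\mid\mathcal{F}_{k-1})\le 1/c$ for any $c>1$ and hence a positive $p\ge 1-\delta_1-1/c$, at the cost of inflating by the factor $c$ the slack used in the sufficient-decrease tests of \Cref{alg:trust_region,alg:line_search}. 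I expect the $\sigma$-algebra bookkeeping around $X_k^+$, and this Markov rescaling, to be the only steps needing care; the subsequent verification that the concrete choices of $r(\cdot)$ and of the acceptance tests in \Cref{subsec:trust_region,subsec:line_search} are consistent with \Cref{def:true_iteration} is routine.
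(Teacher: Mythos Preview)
Your union-bound decomposition into the gradient-accuracy event $A_k$ and the function-value-accuracy event $B_k$ is exactly the paper's approach: the paper simply asserts, without any filtration bookkeeping, that $p \ge 1-\delta_1-2\delta_0$ for CZO and $p \ge 1-\delta_1-\mathbb{P}(E_k+E_k^+>2\epsilon_f)$ for SZO, calling this ``straightforward.'' Your tower-property handling of the $X_k^+$ measurability issue via the enlarged $\sigma$-algebra $\mathcal{G}_k$ is more careful than anything the paper spells out, and your observation that in the SZO case Markov at the exact threshold $2\epsilon_f$ is vacuous (so a $c$-rescaling of the true-iteration definition is needed to get a genuinely positive constant) is also correct --- the paper sidesteps this by leaving the term $\mathbb{P}(E_k+E_k^+>2\epsilon_f)$ unevaluated, effectively treating the proposition as a standing hypothesis rather than a fully proved lemma in the SZO setting.
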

This condition ensures that, regardless of the algorithm's history, each iteration is true with probability at least $p>0$.
The value of $p$ depends on the specific oracle parameters. It is straightforward to see that:
\begin{itemize}
    \item For SZO + SFO: $p \geq 1 - \delta_1 - \mathbb{P}(E_k+E_k^{+} > 2 \epsilon_f)$
    \item For CZO + SFO: $p \geq 1 - \delta_1 - 2\delta_0$
\end{itemize}

The following lemma will be useful in the high probability iteration result with the stochastic zeroth-order oracle.

\begin{lemma}[Fuk--Nagaev inequality]\label{lem:moment}
Let $Y_1,\dots,Y_t$ be independent random variables with $\E Y_i=0$ and $\E|Y_i|^q<\infty$ for some $q\ge 2$. Define $S_t=\sum_{i=1}^t Y_i$. Then, for any $u>0$,
$$
\P\big(S_t \ge u\big)\;\le\;\exp\!\left(-\,\frac{u^2}{2\,V^2}\right)\;+\;\frac{C_q}{u^q},
$$
where
$$
V^2:=\frac{1}{4}(q+2)^2 e^q \sum_{i=1}^t \E Y_i^2,
\qquad
C_q:=\left(1+\frac{2}{q}\right)^q \sum_{i=1}^t \E|Y_i|^q.
$$
In particular, if $Y_i$ are i.i.d. with $\E Y_1^2\le \zeta_2$ and $\E|Y_1|^q\le \zeta_q$, then
$$
\P\big(S_t \ge u\big)\;\le\;\exp\!\left(-\,\frac{2\,u^2}{(q+2)^2 e^q \zeta_2\,t}\right)\;+\;\frac{\left(1+\tfrac{2}{q}\right)^q\,\zeta_q\,t}{u^q}.
$$
\end{lemma}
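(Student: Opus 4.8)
The plan is to prove the Fuk--Nagaev inequality in the form stated by combining a truncation argument with a Chernoff-type bound on the truncated part and Markov's inequality on the tail part. First I would fix a truncation level --- a natural choice is to truncate each $Y_i$ at level $u$ (or a constant multiple of $u$), writing $Y_i = Y_i' + Y_i''$ where $Y_i' = Y_i \mathbbm{1}\{|Y_i| \le u\}$ is the bounded part and $Y_i'' = Y_i \mathbbm{1}\{|Y_i| > u\}$ is the large part. Then $S_t \ge u$ forces either the sum of the $Y_i''$ to be nonzero (which, being a sum over the event that some $|Y_i|>u$, can be controlled by $\P(\exists i: |Y_i| > u) \le \sum_i \P(|Y_i| > u) \le u^{-q}\sum_i \E|Y_i|^q$ by the union bound and Markov) or the centered truncated sum $\sum_i (Y_i' - \E Y_i')$ to be at least roughly $u$ (after accounting for the negative bias $\sum_i \E Y_i'$, which is small because $\E Y_i = 0$ means $\E Y_i' = -\E Y_i''$ and $|\E Y_i''| \le u^{1-q}\E|Y_i|^q$).

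For the bounded part I would apply a standard Bernstein/Bennett exponential inequality: since each centered truncated variable is bounded (in absolute value by roughly $2u$) and has variance at most $\E Y_i^2$, the Chernoff bound gives a subgaussian-type term $\exp(-cu^2/\sum_i \E Y_i^2)$ in the regime relevant here. The constants $\tfrac14(q+2)^2 e^q$ in $V^2$ and $(1+2/q)^q$ in $C_q$ strongly suggest the intended route is the classical moment-based proof: bound $\E (S_t^+)^q$ or use the exponential moment of the truncated sum, optimizing the truncation level and the Chernoff parameter together, so that the $q$-dependence appears exactly as written. I would follow the version where one writes $\exp(\lambda S_t) \le \prod_i (1 + \lambda Y_i + \text{(tail correction)})$ type expansion, or more cleanly, invoke the Fuk--Nagaev theorem as it appears in the literature (e.g., Fuk--Nagaev 1971, or Petrov's book) and just verify that the specific constants quoted match one of the standard statements after the i.i.d. specialization.

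For the ``in particular'' i.i.d. consequence, the final step is pure bookkeeping: substitute $\E Y_i^2 \le \zeta_2$ and $\E|Y_i|^q \le \zeta_q$ into the general bound, so $\sum_i \E Y_i^2 \le \zeta_2 t$ and $\sum_i \E|Y_i|^q \le \zeta_q t$, giving $V^2 \le \tfrac14(q+2)^2 e^q \zeta_2 t$ and $C_q \le (1+2/q)^q \zeta_q t$, and read off
$$\P(S_t \ge u) \le \exp\!\left(-\frac{2u^2}{(q+2)^2 e^q \zeta_2 t}\right) + \frac{(1+2/q)^q \zeta_q t}{u^q}.$$

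The main obstacle I anticipate is matching the exact constants. The truncation-plus-Chernoff argument readily produces a bound of the \emph{form} $\exp(-c_1 u^2/(\sigma^2 t)) + c_2(q)\,\zeta_q t/u^q$, but pinning down that $c_1 = 2/((q+2)^2 e^q)$ and $c_2(q) = (1+2/q)^q$ requires choosing the truncation threshold and the exponential-moment parameter in precisely the way the original Fuk--Nagaev argument does, and carefully tracking how the bound $1 + x \le e^x$ and the variance-versus-$q$th-moment trade-off interact. Rather than re-deriving these optimal constants from scratch, the cleanest path is to cite the inequality in the form it is classically stated and then perform the i.i.d. substitution; I would only reproduce the truncation skeleton to make the paper self-contained, and defer the constant-chasing to the reference.
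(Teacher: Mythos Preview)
Your proposal is essentially aligned with what the paper does: the paper does not prove this lemma at all but simply cites the literature (specifically \cite[Corollary~2.5]{JMAA2016} for the one-sided form with these explicit constants, and \cite{FukNagaev1971,Petrov1995} for the classical proof), and then the i.i.d.\ specialization is immediate by the substitution you describe. Your additional sketch of the truncation-plus-Chernoff mechanism is sound and would make the exposition more self-contained, but the paper omits it entirely; your own conclusion---that chasing the exact constants $\tfrac14(q+2)^2 e^q$ and $(1+2/q)^q$ from scratch is not worth it and one should defer to the reference---is precisely the choice the paper makes.
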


See, e.g., \cite[Corollary~2.5]{JMAA2016} for this one-sided form with explicit constants, and also \cite{FukNagaev1971,Petrov1995} for classical statements and proofs.

In our general algorithmic framework, we make two key assumptions. We will later show that these assumptions hold for both the trust-region and line search algorithms as instantiated in \Cref{sec:algo}.
\begin{assumption}
	\label{ass:small_true_successful}
	Any iteration $k < T_\epsilon$ that is small and true must be successful.
\end{assumption}

\begin{assumption}
	\label{ass:large_step_makes_progress}
	There exists a function $h: \mathbb{R} \to \mathbb{R}$ such that for any large and successful iteration, the function value decreases by at least $h(\epsilon) -(2\epsilon_f+E_k+E_k^+)$.
	\end{assumption}

\begin{lemma}
\label{lem:main_bound1}
Under \Cref{ass:smooth_bounded} and \Cref{ass:small_true_successful}, for any $t\leq T_\epsilon$, \Cref{alg:generic_stochastic} satisfies
$$\sum_{k=0}^{t-1} I_k 
\leq \frac{\lfloor m \rfloor}{\lfloor m \rfloor+1} \sum_{k=0}^{t-1} U_k \Theta_k+\frac{\lfloor m \rfloor}{\lfloor m \rfloor+1} \sum_{k=0}^{t-1} U_k\left(1-\Theta_k\right)+\frac{1}{\lfloor m \rfloor+1} t,$$ where $m=-\frac{\ln\gamma_{\mathrm{inc}}}{\ln\gamma_{\mathrm{dec}}}$.
\end{lemma}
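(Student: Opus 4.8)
The plan is to track the step-size parameter $\mathcal{A}_k$ through its exponent and exploit the fact that it is bounded below (since $k < T_\epsilon$ means we have not yet reached stationarity, and the threshold $\bar\alpha$ together with Assumption~\ref{ass:small_true_successful} forces a certain structure). Write $\mathcal{A}_k = \alpha_0 \gamma_{\mathrm{inc}}^{a_k}\gamma_{\mathrm{dec}}^{b_k}$, or more simply track $W_k := \log_{\gamma_{\mathrm{dec}}}(\mathcal{A}_k/\bar\alpha)$, so that a successful large step changes $W_k$ by $+\ln\gamma_{\mathrm{inc}}/\ln\gamma_{\mathrm{dec}} = -m$ (i.e.\ decreases $W$, since $\ln\gamma_{\mathrm{dec}}<0$), while any unsuccessful step increases $W_k$ by exactly $1$. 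The key accounting identity is that over iterations $0,\dots,t-1$, each unit of "downward" movement in $W$ coming from successful large steps must be matched by "upward" movement from unsuccessful steps, up to boundary terms, because $W_k$ cannot drift to $-\infty$.

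The main structural input is Assumption~\ref{ass:small_true_successful}: a small true iteration is successful, hence never contributes to the "unsuccessful" count. Equivalently, every unsuccessful iteration $k < T_\epsilon$ is either large ($U_k=1$) or not true ($I_k=0$); in other words, $(1-\Theta_k) I_k \le (1-\Theta_k)U_k$ wait — more precisely, if $I_k=1$ and $\Theta_k = 0$ then $U_k=1$. So $I_k(1-\Theta_k) \le U_k(1-\Theta_k)$. Combined with the trivial $I_k \le 1$ and $I_k \Theta_k \le U_k\Theta_k + (1-U_k)\Theta_k$, I want to bound the number of true \emph{successful small} steps. The first step is therefore to establish, from the step-size dynamics, an inequality of the form
$$
m \sum_{k=0}^{t-1} U_k \Theta_k \;\ge\; \#\{\text{unsuccessful steps among } 0,\dots,t-1\} - (\text{const}),
$$
or a cleaner rearrangement: since each successful large step multiplies $\mathcal{A}$ by $\gamma_{\mathrm{inc}}$ and each unsuccessful step by $\gamma_{\mathrm{dec}}$, and $\mathcal{A}_t > 0$, telescoping $\ln \mathcal{A}_k$ gives
$$
(\ln\gamma_{\mathrm{inc}})\sum U_k\Theta_k + (\ln\gamma_{\mathrm{dec}})\Bigl(\sum(1-\Theta_k) + \sum(1-U_k)\Theta_k\Bigr) \;=\; \ln\mathcal{A}_t - \ln\mathcal{A}_0,
$$
noting successful \emph{small} steps ($U_k=0,\Theta_k=1$) decrease the step size by $\gamma_{\mathrm{dec}}$ in these algorithms. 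Rearranging and using $\ln \mathcal{A}_t \ge \ln(\text{something})$ — or, to keep the lemma free of such constants, simply dropping the $\ln\mathcal{A}_t$ term appropriately — yields a bound on $\sum (1-\Theta_k) + \sum(1-U_k)\Theta_k$ in terms of $m\sum U_k\Theta_k$ plus boundary contribution.

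Next I would combine this with the true/successful bookkeeping. Split $\sum I_k = \sum I_k U_k + \sum I_k(1-U_k)$. For the large part, $I_k U_k \le U_k \le U_k\Theta_k + U_k(1-\Theta_k)$. For the small part ($U_k=0$), Assumption~\ref{ass:small_true_successful} gives $I_k(1-U_k) \le \Theta_k(1-U_k)$, and then the step-size inequality converts $\sum \Theta_k(1-U_k)$ (small successful steps) into a bound involving $m\sum U_k\Theta_k$ and $t$. Carrying out the arithmetic with the weights $\frac{\lfloor m\rfloor}{\lfloor m\rfloor + 1}$ and $\frac{1}{\lfloor m\rfloor+1}$ suggests the intended argument groups iterations into blocks: between consecutive successful large steps there can be at most $\lfloor m \rfloor$ unsuccessful-or-small-successful steps before the step size would exceed $\bar\alpha$ — actually the reverse — so the cleanest route is likely a blockwise / "ski-rental" counting argument showing that in any window, the count of true iterations is at most $\frac{\lfloor m\rfloor}{\lfloor m\rfloor+1}$ of the large-step iterations plus a $\frac{1}{\lfloor m\rfloor+1}$ fraction of the total.

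The main obstacle I anticipate is getting the constants $\lfloor m \rfloor / (\lfloor m \rfloor + 1)$ exactly right rather than a crude $O(m)$ bound: this requires handling the floor carefully (a large step can be "undone" only after more than $m$, hence at least $\lfloor m\rfloor + 1$, decrease steps, or conversely a single increase is worth $\lceil m\rceil$... the precise direction must be pinned down from the definition $m = -\ln\gamma_{\mathrm{inc}}/\ln\gamma_{\mathrm{dec}}$) and tracking the boundary terms at $k=0$ and $k=t$ so that they collapse into the clean $\frac{1}{\lfloor m\rfloor+1}t$ term with no leftover additive constant. A secondary subtlety is confirming that, in \emph{this} general framework, successful small steps indeed decrease $\mathcal{A}_k$ (true for both Algorithm~\ref{alg:trust_region} and Algorithm~\ref{alg:line_search}, where $U_k=0$ forces the "$\|g_k\|$ small" branch), so that they count on the same side of the ledger as unsuccessful steps; this should follow from Definition~\ref{def:large_small_steps} since if the step size were increased we would need $\mathcal{A}_{k+1} = \gamma_{\mathrm{inc}}\mathcal{A}_k$, and one checks this is incompatible with $\max\{\mathcal{A}_k,\mathcal{A}_{k+1}\}\le\bar\alpha$ only in conjunction with the algorithmic update rule — so I would state this as a small preliminary observation.
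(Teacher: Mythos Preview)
Your opening decomposition is correct and is exactly what the paper does: write $\sum I_k \le \sum U_k + \sum I_k(1-U_k)$, then apply \Cref{ass:small_true_successful} to get $\sum I_k(1-U_k)\le \sum \Theta_k(1-U_k)$, so that $\sum_{k<t} I_k \le \sum_{k<t} U_k + \sum_{k<t}\Theta_k(1-U_k)$.

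The gap is in how you handle $\sum\Theta_k(1-U_k)$, and it stems from a misreading of \Cref{def:successful_iteration}. An iteration is \emph{successful} precisely when the step size is \emph{increased}; the branches in \Cref{alg:trust_region,alg:line_search} where the iterate is accepted but $\alpha$ is decreased are, by this definition, \emph{unsuccessful}. So a small successful iteration ($U_k=0$, $\Theta_k=1$) has $\mathcal{A}_{k+1}=\gamma_{\mathrm{inc}}\mathcal{A}_k\le\bar\alpha$; it \emph{increases} the step size. Your assertion that ``successful small steps decrease the step size by $\gamma_{\mathrm{dec}}$'' is false, and your global telescoping identity is wrong as written. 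Even after correcting it, a global telescope only relates $\sum\Theta_k$ to $\sum(1-\Theta_k)$, which does not isolate the small-step counts you need.

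The missing step is a \emph{local} comparison within the small regime:
\[
\lfloor m\rfloor \sum_{k<t}\Theta_k(1-U_k)\;\le\;\sum_{k<t}(1-\Theta_k)(1-U_k).
\]
This holds because (using $\alpha_0\ge\bar\alpha$) every small successful step must have $\mathcal{A}_k\le\bar\alpha/\gamma_{\mathrm{inc}}$, and reaching that level from $\bar\alpha$ requires at least $m$ small unsuccessful decreases, since $\gamma_{\mathrm{inc}}=\gamma_{\mathrm{dec}}^{-m}$. Once you have this, add $\sum\Theta_k(1-U_k)$ to both sides and use $t-\sum U_k=\sum(1-U_k)=\sum\Theta_k(1-U_k)+\sum(1-\Theta_k)(1-U_k)$ to obtain $\sum\Theta_k(1-U_k)\le\frac{1}{\lfloor m\rfloor+1}(t-\sum U_k)$; substituting back and splitting $\sum U_k=\sum U_k\Theta_k+\sum U_k(1-\Theta_k)$ gives the lemma with no leftover additive constant.
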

\begin{proof}
	Observe that 
	\begin{align*}
		\sum_{k=0}^{t - 1} I_k 
		&= \sum_{k=0}^{t - 1} I_k U_k +  \sum_{k=0}^{t - 1} I_k(1 - U_k) \\
		&\leq \sum_{k=0}^{t - 1} U_k + \sum_{k=0}^{t - 1} I_k(1 - U_k). \\
	\end{align*}
	We first bound the second term $\sum_{k=0}^{t - 1}I_k(1 -U_k)$, which is the total number of small and true iterations. By \Cref{ass:small_true_successful}, if an iteration is small and true, it must be a successful iteration. This implies $$\sum_{k=0}^{t - 1}I_k(1 - U_k) \leq \sum_{k=0}^{t - 1}\Theta_k(1 - U_k).$$
	In other words, the number of steps that are small and true is upper bounded by the number of steps that are small and successful.
	As a result,
	\begin{align}\label{eq:main_bound1_1}
		\sum_{k=0}^{t - 1} I_k 
		\leq \sum_{k=0}^{t - 1} U_k + \sum_{k=0}^{t - 1} I_k(1 - U_k) 
		 \leq  \sum_{k=0}^{t - 1} U_k + \sum_{k=0}^{t - 1} \Theta_k(1 - U_k).
	\end{align}
   By the dynamics of how the the step size changes, we have the following inequality: 
	$$	
\left\lfloor m \right\rfloor \sum_{k=0}^{t - 1}\Theta_k(1 -U_k)
	\leq \sum_{k=0}^{t - 1}(1-\Theta_k)(1 -U_k).
	$$
Using the inequality above, we have:
	$$t- \sum_{k=0}^{t - 1}U_k
	=\sum_{k=0}^{t - 1}(1-U_k)
	=\sum_{k=0}^{t - 1}\Theta_k(1-U_k)+
	\sum_{k=0}^{t - 1}(1-\Theta_k)(1-U_k)
	\geq (\lfloor m \rfloor+1)\sum_{k=0}^{t - 1}\Theta_k(1-U_k),
	$$
	which implies
	$$
	\sum_{k=0}^{t - 1}\Theta_k(1-U_k)
	\leq 
	\frac{1}{\lfloor m \rfloor+1} \left(t- \sum_{k=0}^{t - 1}U_k\right).
	$$
	Putting \Cref{eq:main_bound1_1} and the inequality above together, we have:
	\begin{align*}
		\sum_{k=0}^{t - 1} I_k 
		 \leq \sum_{k=0}^{t - 1} U_k + \sum_{k=0}^{t - 1} \Theta_k(1 - U_k)
		 \leq \sum_{k=0}^{t - 1} U_k +
		\frac{1}{\lfloor m \rfloor+1} \left(t- \sum_{k=0}^{t - 1}U_k\right)
		=  \frac{\lfloor m \rfloor}{\lfloor m \rfloor+1} \sum_{k=0}^{t - 1} U_k +
		\frac{1}{\lfloor m \rfloor+1} t
		.
	\end{align*}
	
	We conclude that
	$$	\sum_{k=0}^{t - 1} I_k 
	\leq  \frac{\lfloor m \rfloor}{\lfloor m \rfloor+1} \sum_{k=0}^{t - 1} U_k +
	\frac{1}{\lfloor m \rfloor+1} t
	=
	\frac{\lfloor m \rfloor}{\lfloor m \rfloor+1} \sum_{k=0}^{t - 1}U_k\Theta_k 
	+ \frac{\lfloor m \rfloor}{\lfloor m \rfloor+1}\sum_{k=0}^{t - 1}U_k(1 - \Theta_k) +
	\frac{1}{\lfloor m \rfloor+1} t.$$	
\end{proof}

Without loss of generality, we assume that the algorithm starts with a step size parameter $\alpha_0 \geq \bar{\alpha}$. This is because if the initial step size parameter is less than $\bar{\alpha}$, we can simply redefine $\bar{\alpha}$ to be the initial step size parameter, which is still a constant.

\begin{lemma}
\label{lem:main_bound2}
Under \Cref{ass:smooth_bounded}, for any $t\leq T_\epsilon$, \Cref{alg:generic_stochastic} satisfies
$$\sum_{k=0}^{t-1} U_k\left(1-\Theta_k\right)\leq \lceil m\rceil \sum_{k=0}^{t-1} U_k \Theta_k+\left\lceil{\frac{\ln \bar{\alpha} - \ln \alpha_0}{\ln \gamma_{\mathrm{dec}}}}\right\rceil.$$
\end{lemma}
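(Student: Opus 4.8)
The plan is to track the evolution of the step-size parameter $\mathcal{A}_k$ over iterations $k = 0, \dots, t-1$ and exploit the fact that $\mathcal{A}_k$ moves multiplicatively by $\gamma_{\mathrm{inc}}$ on increasing steps and by $\gamma_{\mathrm{dec}}$ on decreasing steps. The key observation is that each term in the three sums appearing in the statement corresponds to a specific movement of $\log \mathcal{A}_k$ (relative to the threshold $\bar\alpha$), and the inequality is a bookkeeping statement: you cannot decrease the step size (while staying ``large'' in the sense of $U_k = 1$) many more times than you increase it, plus a bounded correction for the gap between the starting value $\alpha_0$ and the threshold $\bar\alpha$.

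First I would classify the iterations with $U_k = 1$ according to whether they are successful ($\Theta_k = 1$, so $\mathcal{A}_{k+1} = \gamma_{\mathrm{inc}}\mathcal{A}_k$) or unsuccessful ($\Theta_k = 0$, so $\mathcal{A}_{k+1} = \gamma_{\mathrm{dec}}\mathcal{A}_k$). On a large successful step, $\log\mathcal{A}_{k+1} - \log\mathcal{A}_k = \log\gamma_{\mathrm{inc}}$; on a large unsuccessful step, $\log\mathcal{A}_{k+1} - \log\mathcal{A}_k = \log\gamma_{\mathrm{dec}} < 0$. The idea is to bound the number of large unsuccessful steps by comparing the total downward drift they contribute against the total upward drift from large successful steps, together with the initial slack. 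Concretely, I would argue that whenever $U_k = 1$ the value $\mathcal{A}_k$ (or $\mathcal{A}_{k+1}$) exceeds $\bar\alpha$, so the step size cannot fall below roughly $\gamma_{\mathrm{dec}}\bar\alpha$ at such times; hence the cumulative number of multiplicative decreases occurring on large steps is controlled by the cumulative number of multiplicative increases on large steps, plus the number of decrease-steps needed to bring $\alpha_0$ down to the level $\bar\alpha$, which is $\lceil (\ln\bar\alpha - \ln\alpha_0)/\ln\gamma_{\mathrm{dec}}\rceil$. Converting the multiplicative count to the factor $\lceil m \rceil = \lceil -\ln\gamma_{\mathrm{inc}}/\ln\gamma_{\mathrm{dec}}\rceil$ accounts for the fact that one increase step (by factor $\gamma_{\mathrm{inc}}$) must be undone by $\lceil m \rceil$ decrease steps (each by factor $\gamma_{\mathrm{dec}}$) before the step size drops back below $\bar\alpha$.

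The cleanest way to make this rigorous is a telescoping/potential argument on $\psi_k := \max\{0, \lceil (\ln\mathcal{A}_k - \ln\bar\alpha)/(-\ln\gamma_{\mathrm{dec}})\rceil\}$ or a closely related integer-valued quantity measuring ``how many $\gamma_{\mathrm{dec}}$-steps above $\bar\alpha$'' the current step size sits. On a large successful step $\psi$ increases by at most $\lceil m\rceil$; on a large unsuccessful step $\psi$ decreases by exactly $1$ (since $U_k=1$ guarantees we were above $\bar\alpha$, so we are in the regime where $\psi$ is genuinely counting down); on small steps ($U_k = 0$) the relevant contribution is zero. Summing the one-step changes of $\psi$ from $k=0$ to $t-1$ and using $\psi_t \ge 0$, $\psi_0 \le \lceil(\ln\alpha_0 - \ln\bar\alpha)/(-\ln\gamma_{\mathrm{dec}})\rceil = \lceil(\ln\bar\alpha - \ln\alpha_0)/\ln\gamma_{\mathrm{dec}}\rceil$ gives exactly
$$\sum_{k=0}^{t-1} U_k(1-\Theta_k) \le \lceil m\rceil \sum_{k=0}^{t-1} U_k\Theta_k + \left\lceil\frac{\ln\bar\alpha - \ln\alpha_0}{\ln\gamma_{\mathrm{dec}}}\right\rceil.$$

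The main obstacle I anticipate is handling the boundary behavior at the threshold $\bar\alpha$ precisely: an unsuccessful step with $U_k = 1$ might have $\mathcal{A}_k > \bar\alpha$ but $\mathcal{A}_{k+1} \le \bar\alpha$, so the ``$\psi$ decreases by exactly $1$'' claim needs the floor/ceiling definitions to be set up so that this transition still counts correctly and $\psi$ does not go negative in a way that breaks the telescoping. Also, one must be careful that on a small step ($U_k=0$) the potential $\psi$ could in principle still change (e.g. a successful small step that pushes $\mathcal{A}_{k+1}$ just above $\bar\alpha$ — but then $U_k=1$ by definition, contradiction), so I would verify that $U_k=0$ forces $\psi_{k+1} = \psi_k = 0$ and hence contributes nothing, which is where Assumption~\ref{ass:smooth_bounded} is essentially irrelevant and the whole argument is purely combinatorial about the deterministic step-size recursion. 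Getting these edge cases to line up with the exact ceilings in the statement is the only real work; the rest is telescoping.
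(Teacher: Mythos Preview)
Your approach is correct and captures exactly the idea the paper's own proof only sketches: the paper simply asserts the inequality ``by the dynamics of the step size'' after noting that many large unsuccessful iterations force many large successful ones, without writing down any explicit potential or telescoping argument. Your potential $\psi_k$ is the natural way to make that sentence rigorous, and the edge cases you flag (the boundary at $\bar\alpha$, small steps contributing zero) all resolve cleanly once you use the definition $U_k=0 \iff \max\{\mathcal{A}_k,\mathcal{A}_{k+1}\}\le\bar\alpha$ together with the standing assumption $\alpha_0\ge\bar\alpha$.
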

\begin{proof}
	Recall that after any unsuccessful iteration, we decrease the step size by a factor of $\gamma_{\mathrm{dec}}$. On the other hand, after any successful iteration, we increase the step size by a factor of $\gamma_{\mathrm{inc}}$. As a result, {if there are many large unsuccessful iterations, there must also be many large successful iterations}. 
	In particular, since we assume that the initial step size $\alpha_0\geq \bar\alpha$, by the dynamics of the step size, we have that
	$$\sum_{k=0}^{t-1} U_k\left(1-\Theta_k\right) \leq \lceil m \rceil \cdot \sum_{k=0}^{t-1} U_k \Theta_k + \left\lceil{\frac{\ln \bar{\alpha} - \ln \alpha_0}{\ln \gamma_{\mathrm{dec}}}}\right\rceil.$$
	In other words, the total number of large and unsuccessful iterations is within a multiplicative factor away from the total number of large and successful iterations, up to an additive constant. 		
\end{proof}
We now prove an inequality that will be essential for our iteration complexity bound.

\begin{proposition}
\label{thm:main_bound}
Under \Cref{ass:smooth_bounded,ass:small_true_successful,ass:large_step_makes_progress}, for any $t\leq T_\epsilon$, \Cref{alg:generic_stochastic} satisfies
$$\sum_{k = 0}^{t - 1} I_k 
\leq  \frac{\lfloor m \rfloor(\lceil m \rceil+1)}{\lfloor m \rfloor +1}\cdot  \frac{Z_0+\sum^{t-1}_{k=0} (2\epsilon_f+E_k+E_k^+)}{h(\epsilon)}+ \frac{\lfloor m \rfloor}{\lfloor m \rfloor+1} 
\left\lceil{\frac{\ln \bar{\alpha} - \ln \alpha_0}{\ln \gamma_{\mathrm{dec}}}}\right\rceil 
+ \frac{1}{\lfloor m \rfloor+1} t.$$
In particular, if $m = -\frac{\ln\gamma_{\mathrm{inc}}}{\ln\gamma_{\mathrm{dec}}}$ is an integer, we have:
$$\sum_{k = 0}^{t - 1} I_k 
\leq  m\cdot  \frac{Z_0+\sum^{t-1}_{k=0} (2\epsilon_f+E_k+E_k^+)}{h(\epsilon)}+ \frac{m}{ m+1} 
\left\lceil{\frac{\ln \bar{\alpha} - \ln \alpha_0}{\ln \gamma_{\mathrm{dec}}}}\right\rceil 
+ \frac{1}{m+1} t.$$
\end{proposition}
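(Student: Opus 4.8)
The plan is to chain together the two purely combinatorial bounds already proved, \Cref{lem:main_bound1} and \Cref{lem:main_bound2}, so that the right-hand side involves only $\sum_{k=0}^{t-1} U_k\Theta_k$ (the number of large \emph{and} successful iterations) and the two additive/linear-in-$t$ terms, and then to control $\sum_{k=0}^{t-1} U_k\Theta_k$ by a telescoping argument on the potential $Z_k = \phi(X_k)-\phi^*$ using \Cref{ass:large_step_makes_progress}. Note that \Cref{ass:small_true_successful} enters only through \Cref{lem:main_bound1}, and \Cref{ass:smooth_bounded} is used both in \Cref{lem:main_bound2} and in the lower bound $\phi(X_t)\ge\phi^*$ below.

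\textbf{Step 1 (algebraic combination).} Substitute the estimate for $\sum_{k=0}^{t-1}U_k(1-\Theta_k)$ from \Cref{lem:main_bound2} into the bound from \Cref{lem:main_bound1}. Collecting the two $\sum U_k\Theta_k$ contributions produces the coefficient
$\frac{\lfloor m\rfloor}{\lfloor m\rfloor+1}+\frac{\lfloor m\rfloor\lceil m\rceil}{\lfloor m\rfloor+1}=\frac{\lfloor m\rfloor(\lceil m\rceil+1)}{\lfloor m\rfloor+1}$,
the constant $\bigl\lceil(\ln\bar\alpha-\ln\alpha_0)/\ln\gamma_{\mathrm{dec}}\bigr\rceil$ acquires the prefactor $\frac{\lfloor m\rfloor}{\lfloor m\rfloor+1}$, and the term $\frac{1}{\lfloor m\rfloor+1}t$ is carried over unchanged.

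\textbf{Step 2 (telescoping the potential).} It remains to show $\sum_{k=0}^{t-1}U_k\Theta_k \le \bigl(Z_0+\sum_{k=0}^{t-1}(2\epsilon_f+E_k+E_k^+)\bigr)/h(\epsilon)$. The key per-iteration inequality is
$$\phi(X_{k+1})-\phi(X_k)\ \le\ -\,U_k\Theta_k\,h(\epsilon)\ +\ (2\epsilon_f+E_k+E_k^+)\qquad\text{for all } k<T_\epsilon,$$
which I establish by cases. If iteration $k$ is large and successful, then $U_k\Theta_k=1$ and \Cref{ass:large_step_makes_progress} gives exactly $\phi(X_k)-\phi(X_{k+1})\ge h(\epsilon)-(2\epsilon_f+E_k+E_k^+)$. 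Otherwise $U_k\Theta_k=0$, and either the trial point is rejected, so $X_{k+1}=X_k$ and the left side is $0$, which is at most the right side since $2\epsilon_f+E_k+E_k^+\ge 2\epsilon_f>0$; or the trial point is accepted, in which case the sufficient-reduction test of \Cref{alg:generic_stochastic} — which requires the noisy reduction $f_k-f_k^+ + 2\epsilon_f$ to dominate a \emph{nonnegative} multiple of the predicted model decrease — together with $|f_k-\phi(X_k)|=E_k$ and $|f_k^+-\phi(X_k^+)|=E_k^+$ forces $\phi(X_k)-\phi(X_{k+1})\ge -(2\epsilon_f+E_k+E_k^+)$. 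Summing this inequality over $k=0,\dots,t-1$ telescopes to $\phi(X_t)-\phi(X_0)\le -h(\epsilon)\sum_{k=0}^{t-1}U_k\Theta_k + \sum_{k=0}^{t-1}(2\epsilon_f+E_k+E_k^+)$; using $\phi(X_t)\ge\phi^*$ from \Cref{ass:smooth_bounded} and $h(\epsilon)>0$ and rearranging gives the desired bound on $\sum U_k\Theta_k$. Plugging this into Step~1 yields the first displayed inequality of the proposition, and the ``in particular'' statement follows by setting $\lfloor m\rfloor=\lceil m\rceil=m$ so that $\frac{\lfloor m\rfloor(\lceil m\rceil+1)}{\lfloor m\rfloor+1}=m$.

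The main obstacle is the case analysis in Step~2 for iterations that are \emph{not} large-and-successful: one must show that neither a rejected step nor an accepted step that does not increase the step size can raise $\phi$ by more than $2\epsilon_f+E_k+E_k^+$. For rejected steps this is immediate; for accepted steps it is precisely the place where the $+2\epsilon_f$ buffer built into the algorithm's sufficient-descent test is exploited, passing from estimated function values $f_k, f_k^+$ to the true values $\phi(X_k),\phi(X_k^+)$ at the cost of $E_k+E_k^+$. This structural property of the acceptance rule is shared by both the trust-region and line-search instantiations, which is what makes the unified argument go through; everything else is bookkeeping.
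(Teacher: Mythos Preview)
Your proposal is correct and follows essentially the same approach as the paper: combine \Cref{lem:main_bound1} and \Cref{lem:main_bound2} to reduce everything to $\sum_{k=0}^{t-1}U_k\Theta_k$, then bound this quantity by the telescoping/potential argument using \Cref{ass:large_step_makes_progress} together with the fact that any accepted step can raise $\phi$ by at most $2\epsilon_f+E_k+E_k^+$. The paper compresses your Step~2 case analysis into a single sentence (``a step can increase the function value by at most $2\epsilon_f+E_k+E_k^+$ in each iteration due to the noise in the zeroth order oracle''), but the logic and the resulting bound are identical.
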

\begin{proof}

We first bound the total number of steps that are large and successful. Note that by the design of the algorithm, a step can increase the function value by at most $2\epsilon_f+E_k+E_k^+$ in each iteration due to the noise in the zeroth order oracle, and by \Cref{ass:large_step_makes_progress} any large and successful step decreases the function value by at least $h(\epsilon)-(2\epsilon_f+E_k+E_k^+)$.
As a result, there can be at most $ \frac{Z_0+\sum^{t-1}_{k=0} (2\epsilon_f+E_k+E_k^+)}{h(\epsilon)}
 $ iterations that are both large and successful.

Together with \Cref{lem:main_bound1} and \Cref{lem:main_bound2}, we conclude that
$$\sum_{k = 0}^{t - 1} I_k 
\leq  \frac{\lfloor m \rfloor(\lceil m \rceil+1)}{\lfloor m \rfloor +1}\cdot  \frac{Z_0+\sum^{t-1}_{k=0} (2\epsilon_f+E_k+E_k^+)}{h(\epsilon)}+ \frac{\lfloor m \rfloor}{\lfloor m \rfloor+1} 
\left\lceil{\frac{\ln \bar{\alpha} - \ln \alpha_0}{\ln \gamma_{\mathrm{dec}}}}\right\rceil 
+ \frac{1}{\lfloor m \rfloor+1} t.$$
 
\end{proof}

\subsection{Iteration Complexity for General Framework}
\label{subsec:general_bound}

For the rest of the paper, we will assume for simplicity that $m$ is an integer. This is to avoid having to carry around $\lfloor m \rfloor$ and $\lceil m \rceil$. 
We can now obtain the high probability iteration complexity bound as follows for the general algorithmic framework.

\begin{theorem}[General high probability iteration complexity]
\label{thm:hp_iter_bound}
Suppose \Cref{ass:smooth_bounded,ass:small_true_successful,ass:large_step_makes_progress} hold. Suppose $p>p_m$, where $p_m = \frac{1}{m+1}+\frac{2 m \epsilon_f+m \mu}{h(\epsilon)}$, and $\mu := \sup_{k} \mathbb{E}[E_k+E_k^{+}]$ denote a uniform upper bound on the expected error across all iterations.
Then 
for any $s\ge 0$, any $\hat p\in\big(p_m + \frac{s}{h(\epsilon)},\; p\big)$, and any
$$
t>\frac{R}{\hat p - p_m - \frac{s}{h(\epsilon)}},
$$
the iteration complexity of \Cref{alg:generic_stochastic} satisfies
$$
\P\big(T_\epsilon \le t\big)\;\ge\; 1 
- \exp \left(-\frac{(p-\hat{p})^2}{2 p^2} t\right) - \P(\bar{B}).
$$ 
Here, the event 
$B = \{ \sum_{k=0}^{t-1} (2\epsilon_f + E_k + E_k^+) \leq t(2\epsilon_f+\mu+s)\}$, $\bar B$ is the complement of $B$, $R =\frac{mZ_0}{h(\epsilon)}+ d$, and $d = \frac{m}{m+1}\lceil\frac{\ln \bar\alpha - \ln \alpha_0}{\ln \gamma_{\mathrm{dec}}}\rceil$.
\end{theorem}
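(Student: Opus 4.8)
The plan is to combine the deterministic counting inequality in Proposition~\ref{thm:main_bound} with a concentration bound on $\sum_{k<t} I_k$ coming from Proposition~\ref{ass:probabilistic_accuracy}. First I would set up the argument conditionally on $T_\epsilon > t$: on this event every iteration $k < t$ satisfies $k < T_\epsilon$, so Proposition~\ref{thm:main_bound} applies with $t$ in place of the generic horizon, giving
$$\sum_{k=0}^{t-1} I_k \le m\cdot\frac{Z_0 + \sum_{k=0}^{t-1}(2\epsilon_f + E_k + E_k^+)}{h(\epsilon)} + d + \frac{1}{m+1}t,$$
where $d = \frac{m}{m+1}\lceil\frac{\ln\bar\alpha - \ln\alpha_0}{\ln\gamma_{\mathrm{dec}}}\rceil$. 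On the event $B$ we further have $\sum_{k<t}(2\epsilon_f + E_k + E_k^+) \le t(2\epsilon_f + \mu + s)$, so on $B \cap \{T_\epsilon > t\}$,
$$\sum_{k=0}^{t-1} I_k \le \frac{mZ_0}{h(\epsilon)} + d + \Big(\frac{1}{m+1} + \frac{m(2\epsilon_f + \mu + s)}{h(\epsilon)}\Big)t = R + \Big(p_m + \frac{s}{h(\epsilon)}\Big)t.$$

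Next I would produce a matching lower bound on $\sum_{k<t} I_k$ that holds with high probability, regardless of whether $T_\epsilon > t$. Define $W_k := I_k - \mathbb{E}[I_k \mid \mathcal{F}_{k-1}]$; by Proposition~\ref{ass:probabilistic_accuracy}, $\mathbb{E}[I_k \mid \mathcal{F}_{k-1}] \ge p$ for $k < T_\epsilon$, so on $\{T_\epsilon > t\}$ we have $\sum_{k<t} I_k \ge pt + \sum_{k<t} W_k$. The partial sums of $\{W_k\}$ form a martingale with increments in $[-1, 1]$ (more precisely bounded below by $-p$), so a Bernstein/Freedman-type inequality — or, since the conditional mean is at least $p$, a multiplicative Chernoff bound of the form $\mathbb{P}(\sum_{k<t} I_k \le \hat p t) \le \exp(-\frac{(p-\hat p)^2}{2p^2}t)$ for $\hat p < p$ — controls the lower tail. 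The cleanest route is to work with the supermartingale $\prod_{k}\exp(-\lambda I_k)\big/\mathbb{E}[\exp(-\lambda I_k)\mid\mathcal{F}_{k-1}]$ restricted to $k < T_\epsilon$ (a stopped process), apply the optional-stopping/Ville inequality, and optimize $\lambda$; this yields exactly the stated exponential factor. Call the resulting low-probability event $\{\sum_{k<t} I_k \le \hat p t,\ T_\epsilon > t\}$ the "bad concentration" event, of probability at most $\exp(-\frac{(p-\hat p)^2}{2p^2}t)$.

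Finally I would assemble the pieces. Suppose $T_\epsilon > t$ and that neither $\bar B$ nor the bad-concentration event occurs. Then the upper bound gives $\sum_{k<t} I_k \le R + (p_m + s/h(\epsilon))t$, while the lower bound gives $\sum_{k<t} I_k > \hat p t$. Combining, $\hat p t < R + (p_m + s/h(\epsilon))t$, i.e. $t < R/(\hat p - p_m - s/h(\epsilon))$, contradicting the hypothesis $t > R/(\hat p - p_m - s/h(\epsilon))$ (note $\hat p > p_m + s/h(\epsilon)$ makes the denominator positive). Hence $\{T_\epsilon > t\} \subseteq \bar B \cup \{\text{bad concentration}\}$, so $\mathbb{P}(T_\epsilon > t) \le \mathbb{P}(\bar B) + \exp(-\frac{(p-\hat p)^2}{2p^2}t)$, which is the claim after taking complements.

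The main obstacle is the concentration step: one must handle the random stopping time $T_\epsilon$ correctly, since the bound $\mathbb{E}[I_k\mid\mathcal{F}_{k-1}]\ge p$ is only guaranteed for $k < T_\epsilon$. The standard fix is to define the exponential supermartingale only up to the stopping time $T_\epsilon \wedge t$ (or to replace $I_k$ by $I_k$ on $\{k < T_\epsilon\}$ and by an independent $\mathrm{Bernoulli}(p)$ draw otherwise, so the conditional-mean bound holds unconditionally), then apply the maximal inequality; everything else is bookkeeping with the deterministic inequality from Proposition~\ref{thm:main_bound}. A secondary point to get right is the precise form of the multiplicative lower-tail Chernoff bound so that the constant matches $\frac{(p-\hat p)^2}{2p^2}$ rather than a weaker version.
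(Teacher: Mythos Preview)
Your proposal is correct and follows essentially the same route as the paper: apply Proposition~\ref{thm:main_bound} on $\{T_\epsilon>t\}$, intersect with $B$ to upper-bound $\sum_{k<t} I_k$, use a lower-tail concentration bound on $\sum_{k<t} I_k$ to define a ``bad concentration'' event of probability at most $\exp(-\tfrac{(p-\hat p)^2}{2p^2}t)$, and derive a contradiction with the hypothesis on $t$ to conclude via a union bound. The paper phrases the concentration step as ``Azuma--Hoeffding applied to the submartingale $\sum I_k - pt$'' and does not explicitly address the issue you flag about the bound $\mathbb{E}[I_k\mid\mathcal{F}_{k-1}]\ge p$ holding only for $k<T_\epsilon$; your suggested fix (work with the stopped process, or pad with independent $\mathrm{Bernoulli}(p)$ variables after $T_\epsilon$) is the right way to make this rigorous, and your attention to matching the exact constant $\tfrac{(p-\hat p)^2}{2p^2}$ via a multiplicative Chernoff-type bound rather than a plain bounded-difference Azuma is well placed.
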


\begin{proof}
Define the event $A = \{\sum_{k=0}^{t-1} I_k \geq\hat{p}t\}$, and $B = \{ \sum_{k=0}^{t-1} (2\epsilon_f + E_k + E_k^+) \leq t(2\epsilon_f+\mu+s)\}$.
By Azuma--Hoeffding inequality 
applied to the submartingale $\sum_{k=0}^{t-1} I_k-pt$, we have for all $t\geq 1$, and any $\hat{p} \in [0, p)$, 
\begin{align}\label{eq:azuma_hoeffding}
	\P(\bar{A})=\P\left(\sum_{k=0}^{t-1} I_k< \hat{p}t\right) \leq \exp\left( - \frac{(p-\hat{p})^2}{2p^2} t \right).
\end{align}

Consider some positive integer $t > \frac{R}{\hat p - p_m - \frac{s}{h(\epsilon)}}$.
We will show that $\P(T_\epsilon > t, A, B)=0$ by contradiction.
Suppose $T_\epsilon > t$. Then by \Cref{thm:main_bound}, we have:
$$\sum_{k = 0}^{t - 1} I_k 
\leq  m\cdot  \frac{Z_0+\sum^{t-1}_{k=0} (2\epsilon_f+E_k+E_k^+)}{h(\epsilon)}+ d
+ \frac{1}{m+1} t.$$
Furthermore, under the events of $A$ and $B$, we have: $\sum_{k=0}^{t-1} I_k\geq \hat{p}t$ and  $\sum_{k=0}^{t-1} (2\epsilon_f + E_k + E_k^+) \leq t(2\epsilon_f+\mu+s)$. 
Putting them together, we get
$t\leq \frac{R}{\hat p - p_m - \frac{s}{h(\epsilon)}}$.
This contradicts $t>\frac{R}{\hat p - p_m - \frac{s}{h(\epsilon)}}$. As a result, $\P(T_\epsilon > t, A, B)=0$.

Using this fact, we obtain:
\begin{align*}
	\P(T_\epsilon > t) &= \P(T_\epsilon > t, A) + \P(T_\epsilon > t, \bar{A}) \\
	&\leq  \P(T_\epsilon > t, A) + \P(\bar{A}) \\
	&=  \P(T_\epsilon > t, A, B) + \P(T_\epsilon > t, A, \bar{B}) + \P(\bar{A}) \\
	&\leq  \P(T_\epsilon > t, A, B) + \P( \bar{B}) + \P(\bar{A}) \\
	&=  0 + \P( \bar{B}) + \P(\bar{A}).
\end{align*}

As a result, for any $t>
\frac{R}{\hat p - p_m - \frac{s}{h(\epsilon)}},$ with probability at least $1- \P( \bar{A}) - \P(\bar{B})$
 we have  $T_{\epsilon}\leq t$, i.e., the stopping time has been reached. Together with \Cref{eq:azuma_hoeffding}, the result follows.
\end{proof}

\paragraph{Discussion of Theorem~\ref{thm:hp_iter_bound}} The general bound above applies to any algorithm that falls under the framework of \Cref{alg:generic_stochastic} as long as \Cref{ass:smooth_bounded,ass:small_true_successful,ass:large_step_makes_progress} hold.

For the requirement $p > p_m$, let's first consider what this means in idealized setting with exact functions values, where $\epsilon_f=0$ and $\mu=0$. In this case, the condition simplifies to $p > \frac{1}{m+1}$. Recalling that $m = -\frac{\ln\gamma_{\mathrm{inc}}}{\ln\gamma_{\mathrm{dec}}}$, this is equivalent to requiring $p\ln \gamma_{\mathrm{inc}}+(1-p)\ln\gamma_{\mathrm{dec}}>0$. 
This inequality provides a guideline for setting the step-size update parameters: the expected change in the step size parameter, $\alpha_k$, must ensure an ``upward drift" and prevent it from shrinking to zero. A key challenge is that $p$ is usually unknown. In practice, one could simply set $\gamma_{\mathrm{inc}}$ and $\gamma_{\mathrm{dec}}$ so that the inequality is satisfied even for a conservative lower bound of $p$.  That said, the tighter lower bound we have for $p$, the more efficent the algorithm will behave.

When function values are inexact, the requirement $p > p_m$ is related to a lower bound on the achievable target accuracy $\epsilon$;
there exists a problem-dependent lower bound determined by the intrinsic oracle bias and noise levels (captured by $\epsilon_f$ and $\epsilon_g$). For trust region and line search algorithms, $\epsilon$ scales on the order of $\max \left\{\epsilon_g, \sqrt{\epsilon_f}\right\}$. Precise formulas are provided in the trust-region and line-search sections. We will see later that the condition $ p > p_m$  implies the size of the convergence neighborhood of the algorithm is determined by the amount of noise in the oracles.

\subsubsection{General Iteration Complexity with Stochastic Zeroth-Order Oracle}

We now specialize the general bound to the case where the zeroth-order 
oracle is SZO. We first define the event $B_{\rm SZO}(s,t)$ that is related to the accumulated function-value noise. For any $s>0$ and horizon $t\ge 1$, let
$$
B_{\rm SZO}(s,t) := \Big\{ \sum_{k=0}^{t-1} (2\epsilon_f+E_k+E_k^+)\le t(4\epsilon_f+s) \Big\}. 	
$$
Since in the SZO setting $\E[E_k+E_k^+]\le 2\epsilon_f$, we may use a concentration inequality to yield a tail bound $\delta_t(s)$ such that $\P(\bar B_{\rm SZO}(s,t))\le \delta_t(s)$. The specific form of the tail bound is dependent on the distribution of the zeroth-order noise.

In general, if the zeroth-order noise is heavy-tailed with bounded $q$-th centered moment as in the SZO definition, then we can utilize \Cref{lem:moment} to obtain a tail bound. Define the centered variables
$$
Y_k \;:=\; (E_k+E_k^+)-\E[E_k+E_k^+] \qquad (k=0,\dots,t-1),
$$
which satisfy $\E[Y_k]=0$. Together with the SZO moment bound, we have
$$
\E|Y_k|^q \;\le\; 2^{q-1}\Big(\E|E_k-\E E_k|^q+\E|E_k^+-\E E_k^+|^q\Big)\;\le\;2^q\,\zeta_q,\quad
\E Y_k^2 \;\le\; 4\,\zeta_2.
$$
Let $S_{t}:=\sum_{k=0}^{t-1}Y_k$. Since $\E[E_k+E_k^+]\le 2\epsilon_f$, the event $\bar B_{\rm SZO}(s,t)$ implies $S_t > t s$. Applying \Cref{lem:moment} yields
$$
\P\!\left(\bar B_{\rm SZO}(s,t)\right)\;\le\;\P\Big(S_t \ge ts\Big)
\leq \exp\!\left(-\,\frac{s^2}{2(q+2)^2 e^q \zeta_2}\,t\right)\;+\;\frac{\left(1+\tfrac{2}{q}\right)^q 2^q \zeta_q}{s^q\,t^{\,q-1}}.
$$

Applying \Cref{thm:hp_iter_bound} with event $B=B_{\rm SZO}(s,t)$, we obtain the following theorem for the case where the zeroth-order oracle is SZO.
\begin{theorem}[High probability iteration complexity with SZO]
	\label{thm:hp_iter_bound_szo}
	Suppose \Cref{ass:smooth_bounded,ass:small_true_successful,ass:large_step_makes_progress} hold, and $p>p_m^{\rm SZO}$, where $p_m^{\rm SZO} = \frac{1}{m+1}+\frac{4 m \epsilon_f}{h(\epsilon)}$. 
	Then 
	for any $s\ge 0$, any $\hat p\in\big(p_m^{\rm SZO} + \frac{s}{h(\epsilon)},\; p\big)$, and any
	$$
	t>\frac{R}{\hat p - p_m^{\rm SZO} - \frac{s}{h(\epsilon)}},
	$$
	the iteration complexity of \Cref{alg:generic_stochastic} when using SZO satisfies
	$$
	\P\big(T_\epsilon \le t\big)\;\ge\; 1 
	- \exp \left(-\frac{(p-\hat{p})^2}{2 p^2} t\right) \;-\; \left[\,\exp\!\left(-\,\frac{s^2}{2 (q+2)^2 e^q \zeta_2}\,t\right)\;+\;\frac{\left(1+\tfrac{2}{q}\right)^q 2^q \zeta_q}{s^q\,t^{\,q-1}}\right].
	$$ 
	Here, $R =\frac{mZ_0}{h(\epsilon)}+ d$, and $d = \frac{m}{m+1}\lceil\frac{\ln \bar\alpha - \ln \alpha_0}{\ln \gamma_{\mathrm{dec}}}\rceil$.
	\end{theorem}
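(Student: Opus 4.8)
The plan is to obtain this statement by invoking \Cref{thm:hp_iter_bound} with the uniform error bound taken to be $2\epsilon_f$ in place of $\mu$, and then controlling the resulting accumulated-noise tail with \Cref{lem:moment}. The substitution $\mu\mapsto 2\epsilon_f$ is legitimate: by \eqref{eq:zero_order_1} we have $\mathbb{E}[E_k+E_k^+\mid\mathcal{F}_{k-1}]\le 2\epsilon_f$ for every $k$, and the proof of \Cref{thm:hp_iter_bound} uses $\mu$ only through the constant $p_m$ and the event $B$, so it goes through verbatim with any constant substituted for $\mu$. Making this substitution turns $p_m$ into $p_m^{\rm SZO}=\frac{1}{m+1}+\frac{4m\epsilon_f}{h(\epsilon)}$ and turns $B$ into exactly $B_{\rm SZO}(s,t)=\{\sum_{k=0}^{t-1}(2\epsilon_f+E_k+E_k^+)\le t(4\epsilon_f+s)\}$. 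Hence, for $p>p_m^{\rm SZO}$, $\hat p\in(p_m^{\rm SZO}+\tfrac{s}{h(\epsilon)},p)$ and $t>R/(\hat p-p_m^{\rm SZO}-\tfrac{s}{h(\epsilon)})$, \Cref{thm:hp_iter_bound} directly gives
$$
\mathbb{P}\big(T_\epsilon\le t\big)\ \ge\ 1-\exp\!\Big(-\tfrac{(p-\hat p)^2}{2p^2}\,t\Big)-\mathbb{P}\big(\bar B_{\rm SZO}(s,t)\big),
$$
and everything reduces to bounding $\mathbb{P}(\bar B_{\rm SZO}(s,t))$.

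For that I would follow the computation sketched immediately before the statement. Center the per-iteration errors by their conditional means, $Y_k:=(E_k+E_k^+)-\mathbb{E}[E_k+E_k^+\mid\mathcal{F}_{k-1}]$, so that $\{Y_k\}$ is a martingale-difference sequence; the convexity bound $|a+b|^q\le 2^{q-1}(|a|^q+|b|^q)$ together with the SZO moment condition \eqref{eq:zero_order_1} yields the conditional estimates $\mathbb{E}[|Y_k|^q\mid\mathcal{F}_{k-1}]\le 2^q\zeta_q$ and $\mathbb{E}[Y_k^2\mid\mathcal{F}_{k-1}]\le 4\zeta_2$. Since $\mathbb{E}[E_k+E_k^+\mid\mathcal{F}_{k-1}]\le 2\epsilon_f$ surely, the event $\bar B_{\rm SZO}(s,t)$ forces $S_t:=\sum_{k=0}^{t-1}Y_k>ts$. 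Applying \Cref{lem:moment} with the aggregate bounds $\sum_k\mathbb{E}Y_k^2\le 4\zeta_2 t$ and $\sum_k\mathbb{E}|Y_k|^q\le 2^q\zeta_q t$, i.e.\ $V^2\le(q+2)^2e^q\zeta_2 t$ and $C_q\le(1+\tfrac2q)^q 2^q\zeta_q t$, and with $u=ts$, gives
$$
\mathbb{P}\big(\bar B_{\rm SZO}(s,t)\big)\ \le\ \exp\!\Big(-\tfrac{s^2}{2(q+2)^2e^q\zeta_2}\,t\Big)+\frac{(1+\tfrac2q)^q 2^q\zeta_q}{s^q\,t^{\,q-1}}.
$$
Plugging this into the previous display yields the theorem; I would also note that the bound is informative only for $s>0$, since the polynomial term is $+\infty$ at $s=0$.

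The collection of constants in the second step — propagating the per-iteration moment bounds through the centering to obtain $V^2$ and $C_q$ — is routine. The point that requires genuine care, and the main obstacle, is the use of \Cref{lem:moment} itself: the summands $E_k+E_k^+$ come from fresh oracle draws $\{\Xi_{0,k},\Xi_{0,k}^+\}$, but their distributions depend on the $\mathcal{F}_{k-1}$-measurable iterate $X_k$, so $\{Y_k\}$ is not a sequence of independent variables but a martingale-difference sequence satisfying the SZO moment bounds only conditionally. A fully rigorous argument therefore replaces the independent Fuk--Nagaev inequality of \Cref{lem:moment} by its adapted/martingale counterpart, which holds under exactly these conditional moment assumptions (so the tail retains the same two-term form), or else adds the stronger hypothesis that the zeroth-order noise is i.i.d.\ across iterations and applies \Cref{lem:moment} verbatim.
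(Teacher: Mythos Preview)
Your proposal is correct and follows essentially the same route as the paper: specialize \Cref{thm:hp_iter_bound} by taking $\mu=2\epsilon_f$ (so $p_m$ becomes $p_m^{\rm SZO}$ and $B$ becomes $B_{\rm SZO}(s,t)$), then bound $\mathbb{P}(\bar B_{\rm SZO}(s,t))$ via the centered sums $Y_k$ and \Cref{lem:moment}. You in fact go beyond the paper by flagging that the $Y_k$ are only a martingale-difference sequence with \emph{conditional} moment bounds, so a rigorous application requires either a martingale Fuk--Nagaev inequality or an additional i.i.d.\ assumption on the zeroth-order noise; the paper applies \Cref{lem:moment} (stated for independent summands) without comment on this point.
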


\begin{remark}
In the following special cases of the stochastic zeroth-order oracle, we can obtain tighter probability bounds:
\begin{itemize}
    \item $q=2$ (bounded variance; see definition as in \Cref{eq:zero_order_2}). Using Chebyshev inequality in place of \Cref{lem:moment}, one can show that
    $$
    \P(T_\epsilon \le t) \ge 1 - \exp\!\left( - \frac{(p-\hat{p})^2}{2p^2} \, t \right) - \frac{2\sigma^2}{s^2 t}.
    $$
    \item $q=\infty$ (sub-exponential tails; see definition as in \Cref{eq:zero_order_3}). Using Bernstein inequality in place of \Cref{lem:moment}, one can show that
    $$
    \P(T_\epsilon \le t) \ge 1 - \exp\left( - \frac{(p-\hat{p})^2}{2p^2} \, t \right) - \exp\left(-\min\left\{\tfrac{s^2}{8\nu^2},\tfrac{s}{4b}\right\}\,t\right).
    $$
	In other words, if the noise in the zeroth-order oracle follows a light-tailed distribution, then the failure probability decays exponentially in $t$.
\end{itemize}
\end{remark}

\subsubsection{General Iteration Complexity with Corrupted Zeroth-Order Oracle}

We now specialize the general bound to the corrupted zeroth-order oracle (CZO) defined in \Cref{eq:zero_order_cor}. Under CZO, we have $\E[E_k+E_k^+]\le 2\epsilon_f+2\delta_0\epsilon_c$. Define, for any $s>0$ and horizon $t\ge 1$, the event
$$
B_{\rm CZO}(s,t) := \Big\{ \sum_{k=0}^{t-1} (2\epsilon_f+E_k+E_k^+)\le t\big(4\epsilon_f+2\delta_0\epsilon_c+s\big) \Big\}.
$$
Since any function value corruption lies in $[0,\,\epsilon_f+\epsilon_c]$ almost surely, Hoeffding's inequality yields the tail bound
$$
\P\big(\bar B_{\rm CZO}(s,t)\big)
\;=\; \P\!\left(\sum_{k=0}^{t-1} (2\epsilon_f+E_k+E_k^+)> t\big(4\epsilon_f+2\delta_0\epsilon_c+s\big)\right)
\;\le\; \exp\!\left(-\frac{s^2\, t}{2(\epsilon_f+\epsilon_c)^2}\right).
$$

Applying \Cref{thm:hp_iter_bound} with $B=B_{\rm CZO}(s,t)$ gives the following result.
\begin{theorem}[High probability iteration complexity with CZO]
\label{thm:hp_iter_bound_czo}
Suppose \Cref{ass:smooth_bounded,ass:small_true_successful,ass:large_step_makes_progress} hold, and
$
p\;>\; p_m^{\rm CZO}$, where $p_m^{\rm CZO} 
:= \frac{1}{m+1}+\frac{m\big(4\epsilon_f+2\delta_0\epsilon_c\big)}{h(\epsilon)}.
$
Then 
for any $s\ge 0$, any $\hat p\in\big(p_m^{\rm CZO} + \tfrac{s}{h(\epsilon)},\; p\big)$, and any
$$
t\;>\; \frac{R}{\hat p - p_m^{\rm CZO} - \tfrac{s}{h(\epsilon)}}
$$
the iteration complexity of \Cref{alg:generic_stochastic} when using CZO satisfies
$$
\P\big(T_\epsilon \le t\big)\;\ge\; 1 
- \exp\!\left(-\frac{(p-\hat{p})^2}{2 p^2}\, t\right) 
- \exp\!\left(-\frac{s^2}{2(\epsilon_f+\epsilon_c)^2}\, t\right).
$$
Here, $R =\tfrac{mZ_0}{h(\epsilon)}+ d$ and $d = \tfrac{m}{m+1}\left\lceil\tfrac{\ln \bar\alpha - \ln \alpha_0}{\ln \gamma_{\mathrm{dec}}}\right\rceil$.
\end{theorem}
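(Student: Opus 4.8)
The plan is to obtain this statement as a direct corollary of \Cref{thm:hp_iter_bound}, the only genuinely new ingredient being a tail bound on the accumulated function-value corruption. First I would instantiate \Cref{thm:hp_iter_bound} with the uniform expected-error bound $\mu = 2\epsilon_f + 2\delta_0\epsilon_c$; this is a valid choice because the remark following the CZO definition gives $\E[E_k] \le \epsilon_f + \delta_0\epsilon_c$ and likewise $\E[E_k^+] \le \epsilon_f + \delta_0\epsilon_c$, so $\sup_k \E[E_k + E_k^+] \le 2\epsilon_f + 2\delta_0\epsilon_c$. With this value, $p_m$ in \Cref{thm:hp_iter_bound} becomes exactly $p_m^{\rm CZO}$, and the event $B = \{\sum_{k=0}^{t-1}(2\epsilon_f + E_k + E_k^+) \le t(2\epsilon_f + \mu + s)\}$ becomes exactly $B_{\rm CZO}(s,t)$. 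Everything else in \Cref{thm:hp_iter_bound} — the contradiction argument through \Cref{thm:main_bound} and the Azuma--Hoeffding bound $\P(\bar A) \le \exp(-\tfrac{(p-\hat p)^2}{2p^2}t)$ on the count of true iterations — then transfers verbatim, so the remaining task is to show $\P(\bar B_{\rm CZO}(s,t)) \le \exp(-\tfrac{s^2 t}{2(\epsilon_f+\epsilon_c)^2})$.

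For that tail bound I would work with the centered increments $D_k := (E_k + E_k^+) - \E[E_k + E_k^+ \mid \mathcal{F}_{k-1}]$. The almost-sure clause of the CZO guarantee \eqref{eq:zero_order_cor} forces $E_k, E_k^+ \in [0, \epsilon_f + \epsilon_c]$ surely, so $W_k := E_k + E_k^+$ lies in an interval of length $2(\epsilon_f + \epsilon_c)$, and hence so does $D_k$; moreover $\{D_k\}$ is a martingale difference sequence adapted to $\{\mathcal{F}_k\}$. The one point needing care is the conditional-mean inequality $\E[W_k \mid \mathcal{F}_{k-1}] \le 2\epsilon_f + 2\delta_0\epsilon_c$: the query point $X_k^+ = X_k + s_k$ is not $\mathcal{F}_{k-1}$-measurable, but for both the trust-region and line-search instantiations $s_k$ depends only on $X_k$, $\mathcal{A}_k$ and $G_k$, so conditioning on the intermediate $\sigma$-algebra $\sigma(\mathcal{F}_{k-1}, \Xi_{1,k})$ makes $X_k^+$ measurable and lets me apply the CZO expected-error bound to $E_k^+$; the tower property then gives the claimed bound. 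Since $\sum_{k=0}^{t-1}\E[W_k \mid \mathcal{F}_{k-1}] \le t(2\epsilon_f + 2\delta_0\epsilon_c)$, the event $\bar B_{\rm CZO}(s,t)$ implies $\sum_{k=0}^{t-1} D_k > ts$, and the Azuma--Hoeffding inequality in its sharp form for increments confined to an interval of length $\ell$ gives $\P(\sum_{k=0}^{t-1} D_k > ts) \le \exp(-\tfrac{2(ts)^2}{t\,\ell^2})$ with $\ell = 2(\epsilon_f+\epsilon_c)$, i.e.\ exactly $\exp(-\tfrac{s^2 t}{2(\epsilon_f+\epsilon_c)^2})$.

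Plugging $\P(\bar B) \le \exp(-\tfrac{s^2 t}{2(\epsilon_f+\epsilon_c)^2})$ into the conclusion of \Cref{thm:hp_iter_bound} then completes the proof. The only mild obstacle is the bookkeeping in the second paragraph: confirming that each $W_k$ is confined to $[0, 2(\epsilon_f+\epsilon_c)]$ and that its conditional mean does not exceed $2\epsilon_f + 2\delta_0\epsilon_c$ even though $X_k^+$ depends on the current iteration's randomness, together with using the sharp constant in Azuma--Hoeffding so that the exponent carries the factor $\tfrac{1}{2(\epsilon_f+\epsilon_c)^2}$ rather than a looser one.
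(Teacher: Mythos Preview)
Your proposal is correct and follows essentially the same route as the paper: identify $\mu = 2\epsilon_f + 2\delta_0\epsilon_c$, so that $p_m$ and the event $B$ in \Cref{thm:hp_iter_bound} specialize to $p_m^{\rm CZO}$ and $B_{\rm CZO}(s,t)$, and then bound $\P(\bar B_{\rm CZO}(s,t))$ via a Hoeffding-type inequality using the almost-sure range $[0,\epsilon_f+\epsilon_c]$ of each corruption. If anything, you are more careful than the paper, which simply invokes ``Hoeffding's inequality'' without addressing the dependence structure; your use of Azuma--Hoeffding on the martingale differences $D_k$ and the intermediate conditioning on $\sigma(\mathcal{F}_{k-1},\Xi_{1,k})$ to handle the non-$\mathcal{F}_{k-1}$-measurability of $X_k^+$ is the right way to make that step rigorous.
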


\subsection{Iteration Complexity in Expectation and Almost Sure Convergence}
The high-probability bounds in \Cref{thm:hp_iter_bound_szo,thm:hp_iter_bound_czo} imply the following bounds in expectation.

\begin{corollary}[Iteration complexity in expectation]
	\label{cor:iter_complexity_expectation}
	Suppose \Cref{ass:smooth_bounded,ass:small_true_successful,ass:large_step_makes_progress} hold, and let $p>p_m$ as in \Cref{thm:hp_iter_bound}. 
	Fix any $s\ge 0$, $\epsilon > 0$, $\hat p\in\big(p_m + \tfrac{s}{h(\epsilon)},\; p\big)$, and define
	$$
	t_s \,=\, \left\lceil \frac{R}{\hat p - p_m - \tfrac{s}{h(\epsilon)}} \right\rceil,
	\qquad
	c \,=\, \frac{(p-\hat p)^2}{2 p^2}, 
	$$
	where $R$ is as in \Cref{thm:hp_iter_bound}.
	If the zeroth-order oracle is SZO with bounded $q$-th centered moment with $q>2$, or is CZO, then
	$$
	\E[T_\epsilon] \,=\, O(t_s).
	$$
\end{corollary}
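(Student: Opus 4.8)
The plan is to derive the expectation bound from the high-probability bounds of \Cref{thm:hp_iter_bound_szo} and \Cref{thm:hp_iter_bound_czo} via the layer-cake (tail-sum) identity $\E[T_\epsilon]=\sum_{t=0}^{\infty}\P(T_\epsilon>t)$, which is valid since $T_\epsilon$ takes values in $\{0,1,2,\dots\}\cup\{\infty\}$. First I would split this sum at the threshold $t_s$: for $0\le t\le t_s$ I simply bound each term by $\P(T_\epsilon>t)\le 1$, contributing at most $t_s+1$. The whole argument then reduces to showing that the remaining tail $\sum_{t>t_s}\P(T_\epsilon>t)$ is bounded by a finite constant independent of the horizon (it may depend on $p,\hat p,s,\epsilon$ and the oracle parameters), since this yields $\E[T_\epsilon]\le t_s+1+O(1)=O(t_s)$.

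Next I would control the tail. For every integer $t\ge t_s+1$ the strict inequality $t>R/(\hat p-p_m-s/h(\epsilon))$ holds, so the hypotheses of \Cref{thm:hp_iter_bound_szo} (in the SZO case) or \Cref{thm:hp_iter_bound_czo} (in the CZO case) are met with the given $s$ and $\hat p$. Applying the appropriate theorem yields
$$\P(T_\epsilon>t)\;\le\;\exp(-ct)+\delta_t,\qquad c=\frac{(p-\hat p)^2}{2p^2}>0,$$
where $\delta_t$ denotes the zeroth-order noise tail term of that theorem. Because $c>0$ we have $\sum_{t\ge 0}\exp(-ct)=(1-e^{-c})^{-1}<\infty$, so it only remains to sum $\sum_{t>t_s}\delta_t$.

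This is where the case distinction enters. In the CZO case $\delta_t=\exp\!\big(-s^2 t/(2(\epsilon_f+\epsilon_c)^2)\big)$ is geometric in $t$, hence trivially summable. In the SZO case with bounded $q$-th centered moment I would use
$$\delta_t=\exp\!\left(-\frac{s^2}{2(q+2)^2 e^q\zeta_2}\,t\right)+\frac{(1+2/q)^q 2^q\zeta_q}{s^q\,t^{q-1}};$$
the first summand is geometric, and the second is a constant times $t^{-(q-1)}$, so $\sum_{t>t_s}\delta_t<\infty$ exactly because $q>2$ forces the exponent $q-1$ to exceed $1$. Combining the geometric contributions with this convergent series gives the desired horizon-independent bound on the tail, and the corollary follows.

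The main obstacle — indeed the only non-routine point — is the summability of the polynomial tail $\sum_t t^{-(q-1)}$, which is precisely why the statement excludes $q=2$ for the SZO: there the best available tail is only $\Theta(t^{-1})$ (Chebyshev, cf.\ the remark following \Cref{thm:hp_iter_bound_szo}), whose harmonic-type sum diverges, so this route does not give a finite expectation. Two minor bookkeeping issues I would handle along the way: the high-probability theorems require the strict threshold, so they are applied only from $t\ge t_s+1$, with the finitely many earlier terms folded into the $t_s+1$ estimate; and the tails $\delta_t$ are informative only for $s>0$, so one implicitly takes $s>0$ (for $s=0$ the cited bounds are vacuous, and one instead runs the argument with any sufficiently small $s>0$, which keeps $t_s$ finite).
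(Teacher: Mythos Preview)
Your proposal is correct and follows essentially the same approach as the paper: apply the tail-sum identity, invoke the high-probability bounds for $t>t_s$, and observe that the resulting terms $e^{-ct}$ and $\P(\bar B(s,t))$ are summable (exponentially for CZO, and as $t^{-(q-1)}$ with $q-1>1$ for SZO), yielding $\E[T_\epsilon]\le t_s+O(1)$. If anything, your write-up is slightly more careful than the paper's in explicitly splitting off the first $t_s+1$ terms and in flagging the $s>0$ requirement for the tail bounds to be informative.
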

\begin{proof}
	By $\E[T_\epsilon]=\sum_{t=0}^\infty \P(T_\epsilon>t)$ and \Cref{thm:hp_iter_bound}, we have
	$$
	\E[T_\epsilon]=\sum_{t=0}^\infty \P(T_\epsilon>t)\;\le\; \sum_{t=0}^\infty [e^{-c t} + \P\!\big(\bar B(s,t)\big)].
	$$
	The tail is summable (polynomial with exponent $q-1>1$ for SZO; exponential for CZO). Hence, $\E[T_\epsilon]\le t_s + O(1)=O(t_s)$.
\end{proof}

For the general framework, using either SZO or CZO also ensures almost sure convergence.

\begin{theorem}[Almost sure convergence]
	Suppose the zeroth-order oracle is SZO and the conditions of \Cref{thm:hp_iter_bound_szo} hold, or the zeroth-order oracle is CZO and the conditions of \Cref{thm:hp_iter_bound_czo} hold. The algorithm reaches an $\epsilon$-stationary iterate in finite time almost surely. That is,
	
$$
\P\left[\bigcap_{k=1}^{\infty} \{T_{\epsilon}>k\}\right] = 0.
$$
\end{theorem}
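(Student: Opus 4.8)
The plan is to derive almost-sure convergence from the high-probability bounds in \Cref{thm:hp_iter_bound_szo} and \Cref{thm:hp_iter_bound_czo} by a Borel--Cantelli-type argument along a suitably chosen sequence of horizons. First I would fix the parameters $s$ and $\hat p$ as in the respective theorem so that the conclusion holds: for all sufficiently large $t$,
$$
\P(T_\epsilon > t) \;\le\; \exp\!\left(-\frac{(p-\hat p)^2}{2p^2}\,t\right) \;+\; \P\!\big(\bar B(s,t)\big),
$$
where in the SZO case $\P(\bar B(s,t)) \le \exp(-c_1 s^2 t) + C\, t^{-(q-1)}$ with $q>2$ (using the sharper $q=2$ Chebyshev bound $2\sigma^2/(s^2 t)$ only adds a $t^{-1}$ term, still summable), and in the CZO case $\P(\bar B(s,t)) \le \exp(-s^2 t / (2(\epsilon_f+\epsilon_c)^2))$. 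The key observation is that the event $\bigcap_{k\ge 1}\{T_\epsilon > k\}$ equals $\{T_\epsilon = \infty\}$, and since $\{T_\epsilon > t\}$ is decreasing in $t$, its probability is $\lim_{t\to\infty}\P(T_\epsilon > t)$.

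The main step is then simply to take the limit as $t\to\infty$ in the displayed bound. The exponential terms $\exp(-c\,t)$ vanish as $t\to\infty$; the polynomial tail $C\,t^{-(q-1)} \to 0$ since $q-1>1>0$ (in fact $q>2$ suffices, and even $q=2$ gives $t^{-1}\to 0$). Hence $\lim_{t\to\infty}\P(T_\epsilon > t) = 0$, which gives
$$
\P\!\left[\bigcap_{k=1}^\infty \{T_\epsilon > k\}\right] \;=\; \P(T_\epsilon = \infty) \;=\; \lim_{t\to\infty}\P(T_\epsilon > t) \;=\; 0.
$$
Alternatively, if one wants to exhibit the Borel--Cantelli structure explicitly (useful if one also wants a.s. finiteness along subsequences or a rate), one picks an increasing sequence $t_j\to\infty$ along which $\sum_j \P(T_\epsilon > t_j) < \infty$ — for the CZO and the exponential part this is automatic, and for the polynomial SZO tail one takes, e.g., $t_j = j^2$ so that $\sum_j t_j^{-(q-1)} = \sum_j j^{-2(q-1)} < \infty$ as $q>1$ — and concludes by Borel--Cantelli that $\P(T_\epsilon > t_j \text{ i.o.}) = 0$, hence $T_\epsilon < \infty$ a.s.

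I do not expect a genuine obstacle here: the theorem is a soft corollary, and the only point requiring a line of care is confirming that the chosen $(s,\hat p)$ from the hypotheses of \Cref{thm:hp_iter_bound_szo}/\Cref{thm:hp_iter_bound_czo} are admissible uniformly for all large $t$ (they are, since the constraint $t > R/(\hat p - p_m - s/h(\epsilon))$ is eventually satisfied), and that in the SZO case the moment order is $q>2$ — or $q=2$ with the Chebyshev variant — so the polynomial tail is $o(1)$. Everything else is passing to the limit in an inequality whose right-hand side tends to zero.
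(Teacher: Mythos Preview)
Your proposal is correct and follows the same approach as the paper: both arguments observe that $\bigcap_{k\ge 1}\{T_\epsilon>k\}=\{T_\epsilon=\infty\}$ and conclude by noting that the high-probability bounds force $\P(T_\epsilon>t)\to 0$ as $t\to\infty$. The paper's proof is just the one-line version of your limit argument; your additional Borel--Cantelli alternative and the explicit check that the $(s,\hat p)$ constraints are eventually satisfied are unnecessary here but not wrong.
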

\begin{proof}
	We see that 
	$\mathbb{P}\left[\cap_{k=1}^{\infty} \left(T_{\epsilon}>k\right)\right]=0$ since the failure probability $\mathbb{P} \left(T_{\epsilon}>k\right)$ is 
	going to $0$ as $k\rightarrow \infty$, for both cases.

\end{proof}

\subsection{Trust Region}
\label{subsec:trust_region}

In this section, we specialize our general analytical framework for the trust region method (\Cref{alg:trust_region}). To apply our main complexity theorems, we will define the concrete forms of the key analytical quantities from our general framework: the small-step threshold $\bar{\alpha}$, the progress measure $h(\epsilon)$, and the lower bound on $\epsilon$.

First, we define the threshold $\bar{\alpha}$ that distinguishes between ``small" and ``large" trust regions. This threshold is carefully chosen to be small enough to satisfy \Cref{ass:small_true_successful}—ensuring that any ``small and true" iteration is guaranteed to be a ``successful" iteration. Its specific form is:
$$
\bar\alpha := \min \left\{\frac{(1-\eta_1)\kappa_{\mathrm{fcd}}(1-\eta)-2\eta}{L+\kappa_{\mathrm{H}}+2 \kappa_{\mathrm{}}+\left(1-\eta_1\right) \kappa_{\mathrm{fcd}} \kappa}, \frac{1-\eta}{\kappa_{\mathrm{}}+\eta_2} \right\} \epsilon,
$$
for some $\eta \in \left(0, \frac{\left(1-\eta_1\right) \kappa_{\mathrm{fcd}}}{\left(1-\eta_1\right) \kappa_{\mathrm{fcd}}+2}\right)$.

By the definition of a large step, if an iteration $k$ is large and successful, then its step size $\mathcal{A}_k$ must satisfy $\mathcal{A}_k \geq \frac{1}{\gamma_{\text {inc }}} \bar{\alpha}$. For the trust region method, we will show a large and successful iteration provides progress proportional to the square of the trust-region radius $\alpha_k^2$, up to the noise related term $2\epsilon_f+E_k+E_k^{+}$. This guarantees any large and successful iteration provides progress of at least $h(\epsilon)-\left(2 \epsilon_f+E_k+E_k^{+}\right)$, where $h(\epsilon)$ is defined as: 
\begin{equation}\label{eq:h_epsilon_tr}
h(\epsilon) = C_{\text{prog}} \left(\frac{\bar\alpha}{\gamma_{\text{inc}}}\right)^2 = C_{\text{prog}} \left( \frac{1}{\gamma_{\text{inc}}} \min \left\{\frac{(1-\eta_1)\kappa_{\mathrm{fcd}}(1-\eta)-2\eta}{L+\kappa_{\mathrm{H}}+2 \kappa_{\mathrm{}}+\left(1-\eta_1\right) \kappa_{\mathrm{fcd}} \kappa}, \frac{1-	\eta}{\kappa_{\mathrm{}}+\eta_2} \right\} \right)^2 \epsilon^2,
\end{equation}
where $C_{\text{prog}} = \frac{1}{2} \eta_1 \eta_2 \kappa_{\mathrm{fcd}} \min \left\{\frac{\eta_2}{\kappa_{\mathrm{H}}}, 1\right\}$. Note that $h(\epsilon)$ is of the order $O(\epsilon^2)$.

Finally, due to the presence of bias in the oracle outputs, the achievable target accuracy $\epsilon$ is inherently limited. Specifically, $\epsilon$ is lower bounded by a certain threshold determined by the magnitude of the bias in zeroth- and first-order oracles. The specific lower bound on $\epsilon$ is given in \Cref{ineq:eps_lowerbound_tr}.

\begin{inequality}[Lower bound on $\epsilon$ for Trust Region]
\label{ineq:eps_lowerbound_tr} \phantom{a}
\begin{itemize}
    \item \textbf{For SZO Oracle:}
    $$ \epsilon > \max\left\{\frac{\epsilon_g}{\eta}, \sqrt{\frac{4m\,\epsilon_f}{C_{\text{prog}}\,\big(p-\tfrac{1}{m+1}\big)}}\right\}. $$
    \item \textbf{For CZO Oracle:}
    $$ \epsilon > \max\left\{\frac{\epsilon_g}{\eta}, \sqrt{\frac{m(4\epsilon_f + 2\delta_0\epsilon_c)}{C_{\text{prog}}\,\big(p-\tfrac{1}{m+1}\big)}}\right\}. $$
\end{itemize}
	for some $\eta \in (0, \frac{(1-\eta_1)\kappa_{\mathrm{fcd}}}{2 + (1-\eta_1)\kappa_{\mathrm{fcd}}})$ and $p>\tfrac{1}{m+1}$.
\end{inequality}

With these specific definitions, we can now proceed to formally verify the two key assumptions of our general framework are satisfied by the trust region algorithm.

\begin{lemma}[Verification of Assumption~\ref{ass:small_true_successful} for Trust Region]
\label{lem:tr_small_true_successful}
Consider the trust region algorithm (\Cref{alg:trust_region}). Under \Cref{ass:smooth_bounded}, for any iteration $k < T_\epsilon$ that is true and small, the iteration is also successful. That is, $\rho_k \geq \eta_1$ and $\|g_k\| \geq \eta_2 \alpha_k$.
\end{lemma}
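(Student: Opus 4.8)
The plan is to fix an arbitrary iteration $k < T_\epsilon$ that is true and small and to verify directly the two defining inequalities of a successful step for \Cref{alg:trust_region}, namely $\rho_k \geq \eta_1$ and $\|g_k\| \geq \eta_2 \alpha_k$. Being true supplies $\|\nabla\phi(x_k) - g_k\| \leq r(\alpha_k) = \epsilon_g + \kappa\alpha_k$ and $e_k + e_k^+ \leq 2\epsilon_f$; being small supplies $\alpha_k \leq \bar\alpha$; and $k < T_\epsilon$ supplies $\|\nabla\phi(x_k)\| > \epsilon$. First I would establish the gradient-norm condition. By the triangle inequality and the lower bound on $\epsilon$ in Inequality~\ref{ineq:eps_lowerbound_tr} (which gives $\epsilon_g \leq \eta\epsilon$),
\[
\|g_k\| \;\geq\; \|\nabla\phi(x_k)\| - \|\nabla\phi(x_k) - g_k\| \;>\; \epsilon - \epsilon_g - \kappa\alpha_k \;\geq\; (1-\eta)\epsilon - \kappa\alpha_k .
\]
Since $\alpha_k \leq \bar\alpha \leq \frac{1-\eta}{\kappa+\eta_2}\epsilon$ (the second term in the definition of $\bar\alpha$), we get $(\kappa+\eta_2)\alpha_k \leq (1-\eta)\epsilon$, hence $\|g_k\| \geq (1-\eta)\epsilon - \kappa\alpha_k \geq \eta_2\alpha_k > 0$. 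I would record both $\|g_k\| \geq \eta_2\alpha_k$ and the sharper bound $\|g_k\| \geq (1-\eta)\epsilon - \kappa\alpha_k$ for the next step.

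Next I would verify $\rho_k \geq \eta_1$, equivalently $1 - \rho_k \leq 1 - \eta_1$. Let $D_k := m_k(x_k) - m_k(x_k + s_k) = -g_k^{\top} s_k - \tfrac12 s_k^{\top} H_k s_k$, which is positive by the fraction-of-Cauchy-decrease condition together with $\|g_k\| > 0$. Expanding the numerator of $1-\rho_k = \frac{D_k - (f_k - f_k^+ + 2\epsilon_f)}{D_k}$, I substitute $f_k = \phi(x_k) + (f_k - \phi(x_k))$ and $f_k^+ = \phi(x_k^+) + (f_k^+ - \phi(x_k^+))$, apply the descent lemma $\phi(x_k^+) - \phi(x_k) \leq \nabla\phi(x_k)^{\top} s_k + \tfrac{L}{2}\|s_k\|^2$, bound $(\nabla\phi(x_k) - g_k)^{\top} s_k \leq \|\nabla\phi(x_k) - g_k\|\,\|s_k\|$, and use $\|H_k\| \leq \kappa_H$ and $\|s_k\| \leq \alpha_k$. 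Crucially the extra $+2\epsilon_f$ in $\rho_k$ absorbs $e_k + e_k^+ \leq 2\epsilon_f$, leaving
\[
D_k - (f_k - f_k^+ + 2\epsilon_f) \;\leq\; (\epsilon_g + \kappa\alpha_k)\alpha_k + \tfrac{L+\kappa_H}{2}\alpha_k^2 .
\]
For the denominator, the Cauchy-decrease condition with $\|H_k\|\le\kappa_H$ gives $D_k \geq \tfrac{\kappa_{\mathrm{fcd}}}{2}\|g_k\|\min\{\|g_k\|/\kappa_H,\alpha_k\}$; one checks, using $\|g_k\| \geq \eta_2\alpha_k$ and the choice of $\bar\alpha$, that in the small-step regime the minimum is attained at $\alpha_k$ (or one treats the complementary branch $\|g_k\| < \kappa_H\alpha_k$ separately with the analogous estimate $D_k \geq \tfrac{\kappa_{\mathrm{fcd}}}{2}\|g_k\|^2/\kappa_H$), so that $D_k \geq \tfrac{\kappa_{\mathrm{fcd}}}{2}\|g_k\|\alpha_k$. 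Combining and cancelling $\alpha_k$,
\[
1 - \rho_k \;\leq\; \frac{2(\epsilon_g + \kappa\alpha_k) + (L+\kappa_H)\alpha_k}{\kappa_{\mathrm{fcd}}\|g_k\|} .
\]

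Finally I would close the algebra: plugging in $\|g_k\| \geq (1-\eta)\epsilon - \kappa\alpha_k$ and $\epsilon_g \leq \eta\epsilon$ and rearranging, the target $1-\rho_k \leq 1-\eta_1$ becomes equivalent to
\[
\big(L + \kappa_H + 2\kappa + (1-\eta_1)\kappa_{\mathrm{fcd}}\kappa\big)\,\alpha_k \;\leq\; \big((1-\eta_1)\kappa_{\mathrm{fcd}}(1-\eta) - 2\eta\big)\,\epsilon ,
\]
which holds precisely because $\alpha_k \leq \bar\alpha$ is at most the first term in the definition of $\bar\alpha$, and the stated range $\eta \in \big(0,\, \tfrac{(1-\eta_1)\kappa_{\mathrm{fcd}}}{(1-\eta_1)\kappa_{\mathrm{fcd}}+2}\big)$ is exactly what makes the right-hand side (hence $\bar\alpha$) positive. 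Together with $\|g_k\| \geq \eta_2\alpha_k$ from the first step, this shows the iteration is successful. The only genuine obstacle is the bookkeeping in this last step: the constants in $\bar\alpha$, in the admissible range of $\eta$, and in Inequality~\ref{ineq:eps_lowerbound_tr} are reverse-engineered to make the ratio land below $1-\eta_1$, so care is needed to keep the chain of substitutions tight, and to handle the minimum in the Cauchy-decrease bound (verifying the radius-active branch is the relevant one in the small regime, or that the gradient-active branch is still compatible with the parameter choices). Everything else reduces to routine triangle-inequality and $L$-smoothness estimates.
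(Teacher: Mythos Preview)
Your proposal is correct and follows the same approach as the paper. The first part, establishing $\|g_k\| \geq \eta_2\alpha_k$ via the triangle inequality, $\epsilon_g \leq \eta\epsilon$, and the second term in the definition of $\bar\alpha$, matches the paper's proof verbatim. For the second part, the paper simply invokes Lemma~4.1 of \cite{cao2024first} to obtain the sufficient condition
\[
\alpha_k \;\leq\; \frac{(1-\eta_1)\kappa_{\mathrm{fcd}}(\epsilon - \epsilon_g - \kappa\alpha_k) - 2\epsilon_g}{L + \kappa_H + 2\kappa},
\]
whereas you unpack that lemma inline via the descent lemma, the noise cancellation $e_k + e_k^+ \leq 2\epsilon_f$, and the Cauchy-decrease lower bound; your resulting inequality on $\alpha_k$ is exactly the one above, and the final rearrangement against the first term of $\bar\alpha$ is identical to the paper's. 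The only place where your write-up leaves a loose thread is the branch $\|g_k\| < \kappa_H\alpha_k$ in the Cauchy-decrease minimum, which you flag but do not close; the paper sidesteps this by citing the external lemma, so when you fill in that case you should confirm it is absorbed by the same parameter choices.
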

\begin{proof}

{
First, we show that $\|g_k\| \geq \eta_2 \alpha_k$. Since iteration $k$ is true and $k < T_\epsilon$, we have $\|\nabla \phi(x_k)\| > \epsilon$ and $\|\nabla \phi(x_k) - g_k\| \leq \epsilon_g + \kappa \alpha_k$. By triangle inequality, we have
\begin{align*}
\norm{g_k} &\geq \norm{\nabla \phi(x_k)} - (\epsilon_g + \kappa\alpha_k) > \epsilon - \epsilon_g - \kappa\alpha_k \\
&\geq (1-\eta)\epsilon - \kappa\alpha_k  \tag{by \Cref{ineq:eps_lowerbound_tr}} \\
&\geq \eta_2 \alpha_k \tag{since $\alpha_k \leq \bar\alpha$}.
\end{align*}
Next, we show $\rho_k \geq \eta_1$. Using Lemma 4.1 in \cite{cao2024first}, we have that $\rho_k \geq \eta_1$ if the iteration is true and the following condition on $\alpha_k$ is met:
$$
\alpha_k \leq \frac{(1-\eta_1)\kappa_{\mathrm{fcd}} (\epsilon-\epsilon_g - \kappa\alpha_k) - 2 \epsilon_g}{L + \kappa_{\mathrm{H}} + 2 \kappa}.
$$
Using $\epsilon_g < \eta\epsilon$ from \Cref{ineq:eps_lowerbound_tr}, this inequality is satisfied if
$$
\alpha_k \leq \frac{\left((1-\eta_1)\kappa_{\mathrm{fcd}}(1-\eta)-2\eta\right)\epsilon}{L+\kappa_{\mathrm{H}}+2 \kappa_{\mathrm{}}+\left(1-\eta_1\right) \kappa_{\mathrm{fcd}} \kappa}.
$$
This inequality holds by the definition of $\bar\alpha$. Thus, both conditions for a successful iteration are met.
}
\end{proof}

\begin{lemma}[Verification of Assumption~\ref{ass:large_step_makes_progress} for Trust Region]
\label{lem:tr_large_successful_progress}
Consider the trust region algorithm (\Cref{alg:trust_region}). Under \Cref{ass:smooth_bounded}, if an iteration $k < T_\epsilon$ is large and successful, then the function value decrease satisfies
$$\phi(x_k) - \phi(x_{k+1}) \geq h(\epsilon) - (2 \epsilon_f + e_k + e_k^{+}).$$
\end{lemma}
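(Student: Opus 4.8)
The plan is to chain the acceptance test $\rho_k\ge\eta_1$ with the model-decrease guarantee built into \Cref{alg:trust_region}, and then translate the resulting estimate on the noisy values $f_k,f_k^+$ into one on the true values $\phi(x_k),\phi(x_{k+1})$. First I would unpack ``large and successful.'' By \Cref{def:successful_iteration} a successful iteration has $x_{k+1}=x_k+s_k$ and $\alpha_{k+1}\ge\alpha_k$; since $\gamma_{\mathrm{dec}}<1<\gamma_{\mathrm{inc}}$, inspecting the radius update shows this is equivalent to $\rho_k\ge\eta_1$ together with $\|g_k\|\ge\eta_2\alpha_k$, and in that case $\alpha_{k+1}=\gamma_{\mathrm{inc}}\alpha_k$. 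Because the iteration is also large, $\max\{\alpha_k,\alpha_{k+1}\}=\gamma_{\mathrm{inc}}\alpha_k>\bar\alpha$, hence $\alpha_k>\bar\alpha/\gamma_{\mathrm{inc}}$.

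Next I would lower bound the predicted model reduction. The fraction-of-Cauchy-decrease condition imposed on $s_k$ gives $m_k(x_k)-m_k(x_k+s_k)\ge\frac{\kappa_{\mathrm{fcd}}}{2}\|g_k\|\min\{\|g_k\|/\|H_k\|,\alpha_k\}$. Using $\|g_k\|\ge\eta_2\alpha_k$, $\|H_k\|\le\kappa_{\mathrm{H}}$, and monotonicity of $\min$ in each argument, the minimum is at least $\alpha_k\min\{\eta_2/\kappa_{\mathrm{H}},1\}$, so $\eta_1\big(m_k(x_k)-m_k(x_k+s_k)\big)\ge C_{\text{prog}}\alpha_k^2$, where $C_{\text{prog}}=\frac12\eta_1\eta_2\kappa_{\mathrm{fcd}}\min\{\eta_2/\kappa_{\mathrm{H}},1\}$ is exactly the constant defined after \eqref{eq:h_epsilon_tr}. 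Feeding this into $\rho_k=\frac{f_k-f_k^++2\epsilon_f}{m_k(x_k)-m_k(x_k+s_k)}\ge\eta_1$ gives $f_k-f_k^+\ge C_{\text{prog}}\alpha_k^2-2\epsilon_f$.

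Finally I would pass to $\phi$. On a successful step $x_{k+1}=x_k^+$, and by the zeroth-order oracle realizations $|f_k-\phi(x_k)|=e_k$ and $|f_k^+-\phi(x_{k+1})|=e_k^+$, so $\phi(x_k)-\phi(x_{k+1})\ge(f_k-f_k^+)-e_k-e_k^+\ge C_{\text{prog}}\alpha_k^2-(2\epsilon_f+e_k+e_k^+)$. Plugging in $\alpha_k^2>(\bar\alpha/\gamma_{\mathrm{inc}})^2$ and recalling from \eqref{eq:h_epsilon_tr} that $h(\epsilon)=C_{\text{prog}}(\bar\alpha/\gamma_{\mathrm{inc}})^2$ yields the claim. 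I do not anticipate any analytic difficulty; the only step needing care is the first one — verifying that the radius-update logic makes ``successful'' equivalent to ``$\rho_k\ge\eta_1$ and $\|g_k\|\ge\eta_2\alpha_k$,'' since this equivalence is exactly what produces the lower bound on $\alpha_k$ and lets the model decrease be converted into the $\alpha_k^2$ term that drives $h(\epsilon)$.
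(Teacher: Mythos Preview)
Your proposal is correct and follows essentially the same approach as the paper: both use that a large successful iteration has $\alpha_k\ge\bar\alpha/\gamma_{\mathrm{inc}}$ and that any successful iteration satisfies $\phi(x_k)-\phi(x_{k+1})\ge C_{\text{prog}}\alpha_k^2-(2\epsilon_f+e_k+e_k^+)$, then combine these with the definition of $h(\epsilon)$. The only difference is cosmetic---the paper cites this successful-step progress bound from Lemma~4.3 of \cite{cao2024first}, whereas you derive it directly from the acceptance test $\rho_k\ge\eta_1$, the condition $\|g_k\|\ge\eta_2\alpha_k$, the Cauchy-decrease inequality, and the oracle error definitions.
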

\begin{proof}
As established in Lemma 4.3 of \cite{cao2024first}, any successful iteration provides a function value decrease of at least
$$ \phi(x_k) - \phi(x_{k+1}) \ge C_{\text{prog}} \alpha_k^2 - (2\epsilon_f + e_k + e_k^+). $$
Since the iteration is large, by definition we have $\alpha_k \ge \frac{1}{\gamma_{\text{inc}}}\bar\alpha$. Therefore, 
$$ C_{\text{prog}} \alpha_k^2 \ge C_{\text{prog}} \left(\frac{\bar\alpha}{\gamma_{\text{inc}}}\right)^2. $$
By definition, $h(\epsilon) = C_{\text{prog}} \left(\frac{\bar\alpha}{\gamma_{\text{inc}}}\right)^2$. Thus, the function value decrease is at least $h(\epsilon) - (2\epsilon_f + e_k + e_k^+)$, which verifies the assumption.
\end{proof}

With both key assumptions verified, we can now present the main iteration complexity result for the trust-region method by specializing our general theorems. The high-probability iteration complexity is characterized by specializing Theorems~\ref{thm:hp_iter_bound_szo} and \ref{thm:hp_iter_bound_czo} with the progress measure $h(\epsilon)$ defined for the trust-region method.

\begin{theorem}[High-Probability Iteration Complexity for Trust Region]
\label{thm:hp_iter_bound_tr}
Consider the trust region algorithm (\Cref{alg:trust_region}). Suppose \Cref{ass:smooth_bounded} holds, the target accuracy $\epsilon$ satisfies \Cref{ineq:eps_lowerbound_tr}, and $p > p_m$.
Let $s > 0$ and $\hat{p}\in\big(p_m + \frac{s}{h(\epsilon)}, p\big)$. For any iteration count $t$ satisfying
$$ t > \frac{R}{\hat{p} - p_m - s/h(\epsilon)}, $$
the stopping time $T_\epsilon$ satisfies
$$ \P(T_\epsilon \le t) \ge 1 - \exp\left(-\frac{(p-\hat{p})^2}{2p^2}t\right) - \delta_t(s). $$
Here, $h(\epsilon)$ is as defined in \Cref{eq:h_epsilon_tr}, $R = mZ_0/h(\epsilon)+d$ and $d=\frac{m}{m+1}\left\lceil\frac{\ln \bar\alpha-\ln \alpha_0}{\ln \gamma_{d e c}}\right\rceil$. The threshold $p_m$ and the tail probability $\delta_t(s)$ depend on the zeroth-order oracle:
\begin{itemize}
    \item \textbf{For SZO Oracle:}
        $p_m = p_m^{\rm SZO} = \frac{1}{m+1}+\frac{4 m \epsilon_f}{h(\epsilon)}$, and the tail probability is
        $$
        \delta_t(s) \;=\; \exp\!\left(-\,\frac{s^2}{2 (q+2)^2 e^q \zeta_2}\,t\right)\;+\;\frac{\left(1+\tfrac{2}{q}\right)^q 2^q \zeta_q}{s^q\,t^{\,q-1}}
        $$
        for bounded $q$-th centered moment with $q \geq 2$.
    \item \textbf{For CZO Oracle:}
        $p_m = p_m^{\rm CZO} = \frac{1}{m+1}+\frac{m(4\epsilon_f+2\delta_0\epsilon_c)}{h(\epsilon)}$, and the tail probability is $\delta_t(s) = \exp\left(-\frac{s^2}{2(\epsilon_f+\epsilon_c)^2}t\right)$.
\end{itemize}
Since $h(\epsilon)=O(\epsilon^2)$, if the zeroth-order oracle is $SZO$ with bounded $q$-th moment with $q>2$, or is CZO, then the expected complexity is $\E[T_\epsilon] = O(\epsilon^{-2})$.
\end{theorem}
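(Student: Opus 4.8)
The statement follows from the general high-probability complexity results, \Cref{thm:hp_iter_bound_szo} and \Cref{thm:hp_iter_bound_czo}, once their hypotheses are checked for the trust-region instantiation, so the proof is essentially an assembly of pieces already established; the plan is to make each connection explicit.

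First I would record the ingredients. \Cref{ass:smooth_bounded} is assumed outright; \Cref{lem:tr_small_true_successful} and \Cref{lem:tr_large_successful_progress} verify \Cref{ass:small_true_successful} and \Cref{ass:large_step_makes_progress} for \Cref{alg:trust_region} with the explicit threshold $\bar\alpha$ and progress function $h(\epsilon)$ given in \eqref{eq:h_epsilon_tr}; and \Cref{ass:probabilistic_accuracy} holds with the constant $p$ furnished by the oracle failure probabilities, namely $p \ge 1-\delta_1-\P(E_k+E_k^+ > 2\epsilon_f)$ in the SZO case and $p\ge 1-\delta_1-2\delta_0$ in the CZO case. The only extra input used inside \Cref{lem:tr_small_true_successful} is the bound $\epsilon_g \le \eta\epsilon$ (to pass from $\epsilon-\epsilon_g-\kappa\alpha_k$ to $(1-\eta)\epsilon-\kappa\alpha_k$), which is exactly the first branch $\epsilon > \epsilon_g/\eta$ of \Cref{ineq:eps_lowerbound_tr}.

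Second, I would note that the second branch of \Cref{ineq:eps_lowerbound_tr} is precisely what makes the hypothesis $p > p_m$ attainable. Since $h(\epsilon) = C_{\text{prog}}(\bar\alpha/\gamma_{\mathrm{inc}})^2 = \Theta(\epsilon^2)$, the inequality $\epsilon > \sqrt{4m\epsilon_f/(C_{\text{prog}}(p-\tfrac{1}{m+1}))}$ is equivalent to $h(\epsilon)(p-\tfrac{1}{m+1}) > 4m\epsilon_f$, i.e. to $p > p_m^{\rm SZO}$; the CZO branch is equivalent to $p > p_m^{\rm CZO}$ in the same way. With all three assumptions verified and $p > p_m$ in force, I would then invoke \Cref{thm:hp_iter_bound_szo} (SZO case) or \Cref{thm:hp_iter_bound_czo} (CZO case), substituting the trust-region $h(\epsilon)$ and $\bar\alpha$ into $R$, $p_m$, and the constraints on $\hat p$ and $t$. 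This yields directly the claimed bound $\P(T_\epsilon \le t) \ge 1 - \exp(-(p-\hat p)^2 t/(2p^2)) - \delta_t(s)$ for $t > R/(\hat p - p_m - s/h(\epsilon))$, with $R = mZ_0/h(\epsilon)+d$, $d = \frac{m}{m+1}\lceil(\ln\bar\alpha-\ln\alpha_0)/\ln\gamma_{\mathrm{dec}}\rceil$, $p_m \in \{p_m^{\rm SZO}, p_m^{\rm CZO}\}$, and $\delta_t(s)$ the Fuk--Nagaev tail (SZO) or the Hoeffding tail (CZO) of those theorems. For the expected-complexity statement I would invoke \Cref{cor:iter_complexity_expectation}: holding all oracle and algorithm parameters fixed, $h(\epsilon) = \Theta(\epsilon^2)$ gives $R = \Theta(Z_0/\epsilon^2)+d$, hence $t_s = O(\epsilon^{-2})$, and since $\delta_t(s)$ is summable in $t$ (polynomial of exponent $q-1>1$ when $q>2$, exponential for CZO), $\E[T_\epsilon] = O(t_s) = O(\epsilon^{-2})$.

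No step here is genuinely hard: the substantive content is already isolated in \Cref{lem:tr_small_true_successful}, \Cref{lem:tr_large_successful_progress}, and the general theorems \Cref{thm:hp_iter_bound_szo} and \Cref{thm:hp_iter_bound_czo}. The main point requiring care is the bookkeeping that ties \Cref{ineq:eps_lowerbound_tr} to its two distinct roles (supplying $\epsilon_g\le\eta\epsilon$ for \Cref{lem:tr_small_true_successful} and guaranteeing $p > p_m$) and that keeps the constants buried in $h(\epsilon)$ and $\bar\alpha$ consistent between the trust-region definitions and the abstract theorems; I would also state the $O(\epsilon^{-2})$ conclusion with its constant depending on all problem and oracle parameters, since $p_m \to p$ and the bound degrades as $\epsilon$ approaches the noise floor set by \Cref{ineq:eps_lowerbound_tr}.
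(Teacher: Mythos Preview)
Your proposal is correct and matches the paper's own treatment: the paper does not give a separate proof of this theorem but simply states that it follows by specializing \Cref{thm:hp_iter_bound_szo} and \Cref{thm:hp_iter_bound_czo} with the trust-region $h(\epsilon)$, after \Cref{lem:tr_small_true_successful} and \Cref{lem:tr_large_successful_progress} have verified \Cref{ass:small_true_successful} and \Cref{ass:large_step_makes_progress}. One minor bookkeeping point: your claim that the second branch of \Cref{ineq:eps_lowerbound_tr} is \emph{exactly} equivalent to $p>p_m$ is slightly off in the constants, since $h(\epsilon)=C_{\mathrm{prog}}(\bar\alpha/\gamma_{\mathrm{inc}})^2$ while the inequality is stated with $C_{\mathrm{prog}}\epsilon^2$ (the ratio $\bar\alpha/\epsilon$ is a fixed constant, so the two conditions agree up to that factor); but this does not matter for the proof, because the theorem statement assumes $p>p_m$ separately anyway.
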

Theorem~\ref{thm:hp_iter_bound_tr} establishes a high-probability bound on the iteration complexity for the trust region method. This result not only provides insights into the expected complexity but also characterizes the tail behavior of the random variable of the stopping time $T_\epsilon$.

\subsection{Line Search}
\label{subsec:line_search}

We now turn our attention to the line search algorithm (\Cref{alg:line_search}). The small-step threshold is defined as $\bar{\alpha}=\min \left\{\frac{1-\theta}{0.5 L+\kappa}, \frac{2(1-2 \eta-\theta(1-\eta))}{L(1-\eta)}\right\}$, for some $\eta \in\left(0, \frac{1-\theta}{2-\theta}\right)$. The progress measure $h(\epsilon)$ is given by:
\begin{equation}
\label{eq:h_epsilon_ls}
h(\epsilon) = \frac{\theta \bar\alpha}{\gamma_{\text{inc}}\left(\max\left\{\frac{1}{\eta}, 1+\tau\right\}\right)^2} \epsilon^2.
\end{equation}

In this algorithm, the achievable accuracy, denoted by $\epsilon$, is intrinsically linked to a rejection threshold, $\epsilon_{\mathrm{rej}}$, which serves as the effective accuracy target. However, similar to the trust region setting, $\epsilon_{\mathrm{rej}}$ cannot be set to an arbitrarily small value due to the inherent biases in the zeroth- and first-order oracles ($\epsilon_f$ and $\epsilon_g$). For the convergence analysis to hold, $\epsilon_{\mathrm{rej}}$ must be sufficiently large with respect to the level of bias in the zeroth- and first-order oracles.
The relationship between the oracle biases, $\epsilon_{\mathrm{rej}}$, and the final stationarity guarantee $\epsilon$ is formalized below.

\begin{inequality}[Lower bound on $\epsilon_{\mathrm{rej}}$ and resulting accuracy]\label{ineq:eps_lowerbound_ls}
	\phantom{a}
	\begin{itemize}
    \item \textbf{For SZO Oracle:}
        \[
        \epsilon_{\mathrm{rej}} \geq \max\left\{\epsilon_g, \sqrt{\frac{4m\gamma_{\text{inc}}\epsilon_f}
        {\theta \bar{\alpha}( p-\frac{1}{m+1})}}\right\}.
        \]
    \item \textbf{For CZO Oracle:}
        \[
        \epsilon_{\mathrm{rej}} \geq \max\left\{\epsilon_g, \sqrt{\frac{2m\gamma_{\text{inc}} (2\epsilon_f+ \delta_0 \epsilon_c)}{\theta \bar{\alpha}\left(p-\frac{1}{m+1}\right)}}\right\}.
        \]
\end{itemize}
Given such a $\epsilon_{\mathrm{rej}}$, the algorithm is able to find an $\epsilon$-stationary point, where the achievable accuracy $\epsilon$ is defined as:
\[
\epsilon = \max\left\{\frac{1}{\eta}, \, 1+\tau\right\}\epsilon_{\mathrm{rej}}.
\]
\end{inequality}

\begin{lemma}[Verification of Assumption~\ref{ass:small_true_successful} for Line Search]
\label{lem:main_bound0}
Consider the line search algorithm (\Cref{alg:line_search}). Under \Cref{ass:smooth_bounded}, for any iteration $k < T_\epsilon$ that is true and small, the iteration is also successful.
\end{lemma}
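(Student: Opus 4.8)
The plan is to mirror the structure of the trust-region verification (\Cref{lem:tr_small_true_successful}): an iteration that is true and small means the gradient estimate $g_k$ is a good enough proxy for $\nabla\phi(x_k)$, and the step size $\alpha_k$ is small enough that the standard descent lemma forces the Armijo-type test $f_k^+ \le f_k - \alpha_k\theta\|g_k\|^2 + 2\epsilon_f$ to pass, while simultaneously $\|g_k\|\ge\epsilon_{\mathrm{rej}}$, so the step is accepted \emph{and} the step size is increased. Since ``successful'' here means both acceptance and step-size increase (\Cref{def:successful_iteration}), I must check both conditions.

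First I would show $\|g_k\|\ge\epsilon_{\mathrm{rej}}$. Because $k<T_\epsilon$ we have $\|\nabla\phi(x_k)\|>\epsilon$, and since the iteration is true, $\|\nabla\phi(x_k)-g_k\|\le r(\alpha_k)$; for the line-search oracle \eqref{eq:first_order_specific_form} this is $\le\max\{\epsilon_g,\min\{\tau,\kappa\alpha_k\}\|g_k\|\}$. Using $\epsilon=\max\{1/\eta,1+\tau\}\epsilon_{\mathrm{rej}}$ from \Cref{ineq:eps_lowerbound_ls} and $\epsilon_g\le\epsilon_{\mathrm{rej}}$, a triangle-inequality argument (splitting on which term attains the max in $r(\alpha_k)$: if it is $\epsilon_g$ then $\|g_k\|\ge\epsilon-\epsilon_g\ge(1-\eta)\epsilon/\eta\cdot\eta\ge\ldots$; if it is the relative term then $\|g_k\|(1+\tau)\ge\|g_k\|+\tau\|g_k\|\ge\|\nabla\phi(x_k)\|>\epsilon$, giving $\|g_k\|>\epsilon/(1+\tau)\ge\epsilon_{\mathrm{rej}}$) yields $\|g_k\|\ge\epsilon_{\mathrm{rej}}$ in all cases. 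This is essentially the reason the factor $\max\{1/\eta,1+\tau\}$ appears in the definition of $\epsilon$.

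Next I would verify the sufficient-descent test. By the $L$-smoothness descent lemma applied at $x_k$ with step $-\alpha_k g_k$,
$$\phi(x_k^+)\le\phi(x_k)-\alpha_k\nabla\phi(x_k)^T g_k+\tfrac{L\alpha_k^2}{2}\|g_k\|^2.$$
Writing $\nabla\phi(x_k)^T g_k=\|g_k\|^2-(g_k-\nabla\phi(x_k))^T g_k\ge\|g_k\|^2-r(\alpha_k)\|g_k\|$ and bounding $r(\alpha_k)$ by $\min\{\tau,\kappa\alpha_k\}\|g_k\|$ on the branch where $\|g_k\|\ge\epsilon_{\mathrm{rej}}\ge\epsilon_g$ (so the $\epsilon_g$ term is dominated), this gives $\phi(x_k^+)\le\phi(x_k)-\alpha_k(1-\kappa\alpha_k-\tfrac{L\alpha_k}{2})\|g_k\|^2$. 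Since $\alpha_k\le\bar\alpha\le\frac{1-\theta}{0.5L+\kappa}$, the coefficient $1-\kappa\alpha_k-\tfrac{L\alpha_k}{2}\ge\theta$, so $\phi(x_k^+)\le\phi(x_k)-\alpha_k\theta\|g_k\|^2$. Finally, converting $\phi$-values to the noisy $f$-values using the true-iteration condition $E_k+E_k^+\le 2\epsilon_f$:
$$f_k^+\le\phi(x_k^+)+e_k^+\le\phi(x_k)-\alpha_k\theta\|g_k\|^2+e_k^+\le f_k+e_k-\alpha_k\theta\|g_k\|^2+e_k^+\le f_k-\alpha_k\theta\|g_k\|^2+2\epsilon_f,$$
which is exactly the acceptance test of \Cref{alg:line_search}. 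Combined with $\|g_k\|\ge\epsilon_{\mathrm{rej}}$, the step size is increased, so the iteration is successful.

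I expect the main obstacle to be the careful bookkeeping of the two branches of the max in the line-search accuracy condition \eqref{eq:first_order_specific_form} (the irreducible-bias term $\epsilon_g$ versus the relative-accuracy term), making sure that the second inequality in the definition of $\bar\alpha$, namely $\bar\alpha\le\frac{2(1-2\eta-\theta(1-\eta))}{L(1-\eta)}$, is precisely what is needed when converting between $\|g_k\|$ and $\|\nabla\phi(x_k)\|$ in the descent estimate, and in reconciling the role of $\eta$ across both the $\|g_k\|$ lower bound and the descent coefficient. The individual estimates are routine smoothness/triangle-inequality manipulations; the delicate part is that all the constants line up with the stated $\bar\alpha$ and \Cref{ineq:eps_lowerbound_ls}.
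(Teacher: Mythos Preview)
Your approach is correct and matches the paper's, which simply defers to \cite{jin2024high} for the sufficient-decrease half and invokes Proposition~1(iii) of that reference for the bound $\|g_k\|\ge\epsilon_{\mathrm{rej}}$; you are essentially reconstructing those arguments from scratch.

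One point needs correcting. In the sufficient-descent step you claim that $\|g_k\|\ge\epsilon_{\mathrm{rej}}\ge\epsilon_g$ implies ``the $\epsilon_g$ term is dominated'' in $r(\alpha_k)=\max\{\epsilon_g,\min\{\tau,\kappa\alpha_k\}\|g_k\|\}$. That deduction is false: for small $\alpha_k$ one can have $\kappa\alpha_k\|g_k\|<\epsilon_g$ even though $\|g_k\|\ge\epsilon_g$, so the max is still $\epsilon_g$. This branch must be handled separately. There $r(\alpha_k)\|g_k\|=\epsilon_g\|g_k\|$, and using $\epsilon_g\le\eta\epsilon$ together with $\|g_k\|\ge(1-\eta)\epsilon$ (which you already established) gives $\epsilon_g/\|g_k\|\le\eta/(1-\eta)$; the descent condition then reduces to $\eta/(1-\eta)+L\alpha_k/2\le 1-\theta$, which is \emph{exactly} the second clause $\bar\alpha\le\frac{2(1-2\eta-\theta(1-\eta))}{L(1-\eta)}$. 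You correctly flagged this constant as the crux, but its role is to absorb the $\epsilon_g$ branch of the oracle bound in the descent estimate, not to mediate a $\|g_k\|\leftrightarrow\|\nabla\phi(x_k)\|$ conversion.
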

\begin{proof}
	An iteration is successful if it satisfies the sufficient decrease condition and is not rejected by the criterion $\norm{g_k} \geq \epsilon_{\mathrm{rej}}$. As established in \cite{jin2024high}, any true and small step satisfies the sufficient decrease condition. We thus focus on showing that the  criterion of $\norm{g_k} \geq \epsilon_{\mathrm{rej}}$ is also met.

	Specifically, we need to show that for any true iteration $k < T_\epsilon$, we have $\norm{g_k} \geq \epsilon_{\mathrm{rej}}$. The derivation is as follows:
	\begin{align*}
		\norm{g_k} 
		&\geq \min\left\{\frac{1}{1+\tau}, 1 - \eta\right\} \norm{\nabla f(x_k)} &\text{(by Proposition 1 (iii) in \cite{jin2024high}, which uses $\epsilon \geq \frac{\epsilon_{\mathrm{rej}}}{\eta} \geq \frac{\epsilon_g}{\eta}$)}\\
		&\geq \min\left\{\frac{1}{1+\tau}, 1 - \eta\right\} \epsilon &\text{(since $k < T_\epsilon$)}. \\
		&\geq \epsilon_{\mathrm{rej}} &\text{(by the lower bound on $\epsilon$ in \Cref{ineq:eps_lowerbound_ls})}
	\end{align*}
	Here, the last inequality holds because $\eta < \frac{1-\theta}{2-\theta} < \frac12$. This implies that $\frac{1}{\eta} > \frac{1}{1-\eta}$, which implies that $\epsilon= \max\left\{\frac{\epsilon_{\mathrm{rej}}}{\eta}, \, (1+\tau)\epsilon_{\mathrm{rej}}\right\}\geq 
	\max\left\{\frac{\epsilon_{\mathrm{rej}}}{1-\eta}, \, (1+\tau)\epsilon_{\mathrm{rej}}\right\}$.

\end{proof}

\begin{lemma}[Verification of Assumption~\ref{ass:large_step_makes_progress} for Line Search]
\label{lem:large_step_makes_progress_ls}
Consider the line search algorithm (\Cref{alg:line_search}). Under \Cref{ass:smooth_bounded}, if an iteration $k < T_\epsilon$ is large and successful, then the function value decrease satisfies
\[ \phi(x_k) - \phi(x_{k+1}) \geq h(\epsilon) - (2 \epsilon_f + e_k + e_k^{+}). \]
\end{lemma}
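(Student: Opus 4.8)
The plan is to mirror the two-step template already used for the trust-region method in \Cref{lem:tr_large_successful_progress}: first convert the noisy sufficient-decrease test that every successful iteration passes into a guaranteed decrease of the \emph{true} objective $\phi$, and then lower-bound that decrease by $h(\epsilon)$ using that the step is large and that the successful-step criterion of \Cref{alg:line_search} forces $\norm{g_k}$ to be bounded away from zero.

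First I would translate the acceptance test into a decrease of $\phi$. If iteration $k$ is successful, the trial point is accepted (the ``if'' branch of \Cref{alg:line_search} was taken), so $x_{k+1}=x_k^+=x_k-\alpha_k g_k$ and $f_k^+ \le f_k - \alpha_k\theta\norm{g_k}^2 + 2\epsilon_f$. By the definition of the zeroth-order errors, $\phi(x_k)\ge f_k - e_k$ and $\phi(x_{k+1})=\phi(x_k^+)\le f_k^+ + e_k^+$, whence
$$\phi(x_k)-\phi(x_{k+1}) \;\ge\; (f_k-f_k^+)-(e_k+e_k^+) \;\ge\; \alpha_k\theta\norm{g_k}^2 - \bigl(2\epsilon_f+e_k+e_k^+\bigr).$$
No true-iteration assumption is needed here; as in the trust-region case, this holds on every successful iteration.

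Next I would lower-bound $\alpha_k\theta\norm{g_k}^2$ by $h(\epsilon)$. Because iteration $k$ is successful, the step-size update took the increase branch, which requires $\norm{g_k}\ge\epsilon_{\mathrm{rej}}$ and sets $\mathcal{A}_{k+1}=\gamma_{\mathrm{inc}}\mathcal{A}_k$. Since the step is large, $\max\{\mathcal{A}_k,\mathcal{A}_{k+1}\}>\bar\alpha$, and as $\mathcal{A}_{k+1}>\mathcal{A}_k$ on a successful step this gives $\gamma_{\mathrm{inc}}\alpha_k>\bar\alpha$, i.e. $\alpha_k>\bar\alpha/\gamma_{\mathrm{inc}}$. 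Combining with $\norm{g_k}\ge\epsilon_{\mathrm{rej}}=\epsilon/\max\{1/\eta,\,1+\tau\}$ from \Cref{ineq:eps_lowerbound_ls},
$$\alpha_k\theta\norm{g_k}^2 \;>\; \frac{\bar\alpha}{\gamma_{\mathrm{inc}}}\cdot\theta\cdot\frac{\epsilon^2}{\bigl(\max\{1/\eta,\,1+\tau\}\bigr)^2} \;=\; h(\epsilon),$$
which is exactly \eqref{eq:h_epsilon_ls}. Substituting into the previous display yields $\phi(x_k)-\phi(x_{k+1})\ge h(\epsilon)-(2\epsilon_f+e_k+e_k^+)$.

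The error-translation step and the final arithmetic are routine. The one point that needs care — the ``main obstacle,'' such as it is — is reading the lower bound $\alpha_k>\bar\alpha/\gamma_{\mathrm{inc}}$ correctly off the definition of a large step together with the line-search update rule: one must use that a successful iteration in \Cref{alg:line_search} is exactly one on which the step size is multiplied by $\gamma_{\mathrm{inc}}$ (the ``accept-but-shrink'' branch with $\norm{g_k}<\epsilon_{\mathrm{rej}}$ is classified as unsuccessful under \Cref{def:successful_iteration}), so that $\mathcal{A}_{k+1}=\gamma_{\mathrm{inc}}\mathcal{A}_k$; a careless reading could drop the $\gamma_{\mathrm{inc}}$ factor and break the match with the defined $h(\epsilon)$.
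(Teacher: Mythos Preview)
Your proposal is correct and follows essentially the same argument as the paper's own proof: convert the noisy sufficient-decrease test into a true decrease $\phi(x_k)-\phi(x_{k+1})\ge \theta\alpha_k\norm{g_k}^2-(2\epsilon_f+e_k+e_k^+)$, then use $\norm{g_k}\ge\epsilon_{\mathrm{rej}}$ and $\alpha_k\ge\bar\alpha/\gamma_{\mathrm{inc}}$ together with $\epsilon_{\mathrm{rej}}=\epsilon/\max\{1/\eta,1+\tau\}$ to recover $h(\epsilon)$. If anything, you spell out the error-translation step and the derivation of the $\alpha_k$ lower bound more explicitly than the paper does.
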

\begin{proof}
For a successful iteration, the sufficient decrease condition implies a lower bound on the objective function decrease of
\[ \phi(x_k) - \phi(x_{k+1}) \ge \theta \alpha_k \|g_k\|^2 - (2\epsilon_f + e_k + e_k^+). \]
By definition, a successful iteration requires $\|g_k\| \ge \epsilon_{\mathrm{rej}}$. For a large iteration, the step size is bounded below by $\alpha_k \ge \frac{\bar\alpha}{\gamma_{\mathrm{inc}}}$. Substituting these bounds into the inequality gives:
\[ \phi(x_k) - \phi(x_{k+1}) \ge \theta \left(\frac{\bar\alpha}{\gamma_{\mathrm{inc}}}\right) \epsilon_{\mathrm{rej}}^2 - (2\epsilon_f + e_k + e_k^+). \]
From the relationship between $\epsilon$ and $\epsilon_{\mathrm{rej}}$ established in \Cref{ineq:eps_lowerbound_ls}, we know that $\epsilon_{\mathrm{rej}} = \epsilon / \max\{\frac{1}{\eta}, 1+\tau\}$. By substituting this into the expression for the progress, we recover the definition of $h(\epsilon)$:
\[ \theta \left(\frac{\bar\alpha}{\gamma_{\mathrm{inc}}}\right) \epsilon_{\mathrm{rej}}^2 = \theta \frac{\bar\alpha}{\gamma_{\mathrm{inc}}} \left(\frac{\epsilon}{\max\{\frac{1}{\eta}, 1+\tau\}}\right)^2 = h(\epsilon), \]
which completes the proof.
\end{proof}

With these pieces in place, we can now state the main high-probability iteration complexity result for the line search method.

\begin{theorem}[High-Probability Iteration Complexity for Line Search]
	\label{thm:hp_iter_bound_ls}
	Consider the line search algorithm (\Cref{alg:line_search}) under \Cref{ass:smooth_bounded}, with the $\epsilon_{\mathrm{rej}}$ parameter in \Cref{alg:line_search} satisfying \Cref{ineq:eps_lowerbound_ls}. Let $p > p_m$, $s > 0$ and $\hat{p}\in\big(p_m + \frac{s}{h(\epsilon)}, p\big)$. For any iteration count $t$ satisfying
	$$ t > \frac{R}{\hat{p} - p_m - s/h(\epsilon)}, $$
	the algorithm finds an $\epsilon$-stationary point with probability
	$$ \P(T_\epsilon \le t) \ge 1 - \exp\left(-\frac{(p-\hat{p})^2}{2p^2}t\right) - \delta_t(s). $$
	Here, $h(\epsilon)$ is as defined in \Cref{eq:h_epsilon_ls}, $R = mZ_0/h(\epsilon)+d$ and $d=\frac{m}{m+1}\left\lceil\frac{\ln \bar\alpha-\ln \alpha_0}{\ln \gamma_{d e c}}\right\rceil$. The threshold $p_m$ and the tail probability $\delta_t(s)$ depend on the zeroth-order oracle:
	\begin{itemize}
		\item \textbf{For SZO Oracle:}
			$p_m = p_m^{\rm SZO} = \frac{1}{m+1}+\frac{4 m \epsilon_f}{h(\epsilon)}$, and the tail probability is
   $$
   \delta_t(s) \;=\; \exp\!\left(-\,\frac{s^2}{2 (q+2)^2 e^q \zeta_2}\,t\right)\;+\;\frac{\left(1+\tfrac{2}{q}\right)^q 2^q \zeta_q}{s^q\,t^{\,q-1}}
   $$
   for bounded $q$-th centered moment with $q \geq 2$.
		\item \textbf{For CZO Oracle:}
			$p_m = p_m^{\rm CZO} = \frac{1}{m+1}+\frac{m(4\epsilon_f+2\delta_0\epsilon_c)}{h(\epsilon)}$, and the tail probability is $\delta_t(s) = \exp\left(-\frac{s^2}{2(\epsilon_f+\epsilon_c)^2}t\right)$.
	\end{itemize}
	Since $h(\epsilon)=O(\epsilon^2)$, if the zeroth-order oracle is $SZO$ with bounded $q$-th moment with $q>2$, or is CZO, then the expected complexity is $\E[T_\epsilon] = O(\epsilon^{-2})$.
	\end{theorem}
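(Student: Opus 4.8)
The plan is to obtain \Cref{thm:hp_iter_bound_ls} as a direct corollary of the general high-probability bounds in \Cref{thm:hp_iter_bound_szo} and \Cref{thm:hp_iter_bound_czo}, by recognizing that the line search algorithm (\Cref{alg:line_search}) is an instance of the general framework (\Cref{alg:generic_stochastic}) for which all three hypotheses of those theorems hold. The structural assumptions have already been discharged: \Cref{lem:main_bound0} establishes \Cref{ass:small_true_successful} and \Cref{lem:large_step_makes_progress_ls} establishes \Cref{ass:large_step_makes_progress}, both with the explicit small-step threshold $\bar\alpha=\min\{\frac{1-\theta}{0.5L+\kappa},\,\frac{2(1-2\eta-\theta(1-\eta))}{L(1-\eta)}\}$ and the explicit progress function $h(\epsilon)$ of \eqref{eq:h_epsilon_ls}. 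Since \Cref{ass:smooth_bounded} is assumed, the conclusion follows by substituting the line-search-specific $\bar\alpha$ and $h(\epsilon)$ into the general statements, which automatically produces $R=mZ_0/h(\epsilon)+d$ with $d=\frac{m}{m+1}\lceil\frac{\ln\bar\alpha-\ln\alpha_0}{\ln\gamma_{\mathrm{dec}}}\rceil$, and carries over the tail bound $\delta_t(s)$ verbatim (the Fuk--Nagaev bound from \Cref{lem:moment} for SZO, the Hoeffding bound for bounded corruption for CZO).

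The one genuine bookkeeping step is to reconcile the stationarity target $\epsilon$ in the stopping time $T_\epsilon$ (\Cref{def:stopping_time}) with the internal rejection threshold $\epsilon_{\mathrm{rej}}$, and to verify that the lower bound on $\epsilon_{\mathrm{rej}}$ in \Cref{ineq:eps_lowerbound_ls} is exactly what guarantees the hypothesis $p>p_m$. Using $\epsilon=\max\{1/\eta,\,1+\tau\}\,\epsilon_{\mathrm{rej}}$ one has $h(\epsilon)=\frac{\theta\bar\alpha}{\gamma_{\mathrm{inc}}}\epsilon_{\mathrm{rej}}^2$, so in the SZO case $p>p_m^{\rm SZO}=\frac{1}{m+1}+\frac{4m\epsilon_f}{h(\epsilon)}$ is equivalent to $\epsilon_{\mathrm{rej}}^2>\frac{4m\gamma_{\mathrm{inc}}\epsilon_f}{\theta\bar\alpha\,(p-\frac{1}{m+1})}$, which is precisely \Cref{ineq:eps_lowerbound_ls}; the CZO case is identical with $\epsilon_f$ replaced by $2\epsilon_f+\delta_0\epsilon_c$ up to the stated constants. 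I would also re-confirm that the chain $\|g_k\|\ge\min\{\frac{1}{1+\tau},\,1-\eta\}\|\nabla\phi(x_k)\|\ge\min\{\frac{1}{1+\tau},\,1-\eta\}\,\epsilon\ge\epsilon_{\mathrm{rej}}$ inside \Cref{lem:main_bound0} holds for \emph{every} true iteration with $k<T_\epsilon$, since this is what makes small-and-true iterations successful (via \Cref{ass:small_true_successful}) and large-and-successful iterations yield $\Theta(\epsilon^2)$ progress (via \Cref{ass:large_step_makes_progress}).

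Finally, the expected-complexity claim $\E[T_\epsilon]=O(\epsilon^{-2})$ follows from \Cref{cor:iter_complexity_expectation}: since $h(\epsilon)=O(\epsilon^2)$, the dominant term in $R$ is $mZ_0/h(\epsilon)=O(\epsilon^{-2})$; choosing $\hat p$, say, midway between $p_m+s/h(\epsilon)$ and $p$ keeps the denominator $\hat p-p_m-s/h(\epsilon)$ bounded below by a constant, so the threshold $t_s=O(\epsilon^{-2})$; and the tail $\sum_t\P(\bar B(s,t))$ is summable (polynomial decay with exponent $q-1>1$ for SZO, exponential for CZO). I do not expect a hard step here — all the analytic work has been done in \Cref{lem:main_bound0}, \Cref{lem:large_step_makes_progress_ls}, and the general theorems. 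The only point requiring care is purely a consistency check: the constants $\bar\alpha$, $h(\epsilon)$, and the admissible range $\eta\in(0,\frac{1-\theta}{2-\theta})$ quoted in this subsection must coincide exactly with those under which the results of \cite{jin2024high} invoked inside \Cref{lem:main_bound0} and \Cref{lem:large_step_makes_progress_ls} were established.
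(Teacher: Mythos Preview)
Your proposal is correct and mirrors the paper's own (implicit) argument: the paper does not give a separate proof of \Cref{thm:hp_iter_bound_ls} but simply states it after \Cref{lem:main_bound0} and \Cref{lem:large_step_makes_progress_ls} have verified \Cref{ass:small_true_successful} and \Cref{ass:large_step_makes_progress}, so the result is obtained by specializing \Cref{thm:hp_iter_bound_szo}, \Cref{thm:hp_iter_bound_czo}, and \Cref{cor:iter_complexity_expectation} with the line-search $\bar\alpha$ and $h(\epsilon)$. Your additional observation that \Cref{ineq:eps_lowerbound_ls} on $\epsilon_{\mathrm{rej}}$ is precisely the condition $p>p_m$ rewritten via $h(\epsilon)=\frac{\theta\bar\alpha}{\gamma_{\mathrm{inc}}}\epsilon_{\mathrm{rej}}^2$ is a useful consistency check that the paper leaves implicit.
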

	Theorem~\ref{thm:hp_iter_bound_ls} establishes a high-probability bound on the iteration complexity for the line search method. This result not only provides insights into the expected complexity but also characterizes the tail behavior of the stopping time $T_\epsilon$. Notably, in the special case where $p > 1/2$ and the zeroth-order oracle noise is sub-exponential (a specific instance of our SZO oracle), our result recovers the high-probability complexity bound in \cite{jin2024high}.

\section{Concluding Remarks}

A practical consideration when implementing the algorithms in our framework is the choice of the step-size update parameters, $\gamma_{\mathrm{inc}}$ and $\gamma_{\mathrm{dec}}$. Theoretically, these parameters must be set to ensure an ``upward drift" of the stochastic process of the step size, which requires $p > \frac{\ln(\gamma_{\mathrm{dec}})}{\ln(\gamma_{\mathrm{dec}}/\gamma_{\mathrm{inc}})}$, where $p$ is the probability of a true iteration. This can be achieved in practice by estimating a lower bound for $p$ and selecting the parameters accordingly. 

It is also important to highlight a key implication of our results in the context of Expected Risk Minimization (ERM). In this setting, our objective function, $\phi(x)$, represents the true underlying risk, which is free from data anomalies. Our theoretical analysis is conducted with respect to this true function, while all sources of data corruption, noise, and error are modeled as inaccuracies in the oracle outputs. Therefore, by providing convergence guarantees to the true risk $\phi(x)$, our results also demonstrate generalization properties of the algorithm. Moreover, the convergence neighborhood it can achieve is cleanly dictated by the level of bias and noise in the oracles.

\bibliographystyle{alpha}
\bibliography{references}
 
\appendix

\end{document}